\newtheorem{defin}{Definition}
\newtheorem{theo}[defin]{Theorem}
\newtheorem{algo}[defin]{Algorithm}
\newcommand{\dic}{\mathcal{D}}
\newcommand{\class}{\mathcal{C}}
\newcommand{\real}{\mathbb{R}}
\newcommand{\nat}{\mathbb{N}}
\newcommand{\T}{\mathcal{T}}
\newcommand{\Lp}[1]{\mathrm{L}^#1(\Omega)}
\newcommand{\SBN}{\mathcal{H}}
\DeclareMathOperator*{\argmax}{arg\,max}
\DeclareMathOperator*{\argmin}{arg\,min}
\DeclareMathOperator*{\hull}{span}
\newcommand{\lon}{\varphi}
\newcommand{\era}{\varepsilon^r}
\newcommand{\ephi}{\varepsilon^\lon}
\newcommand{\ete}{\varepsilon^t}
\newcommand{\pdervr}{\frac{\partial}{\partial r}}
\newcommand{\pdervlon}{\frac{\partial}{\partial \lon}}
\newcommand{\pdervt}{\frac{\partial}{\partial t}}
\newcommand{\ball}{\mathbb{B}}
\newcommand{\RFMP}{\mathrm{RFMP}}
\title{A first approach to learning a best basis for gravity field modelling}
\author{
	\normalsize{Volker Michel and Naomi Schneider\thanks{Corresponding author: naomi.schneider@mathematik.uni-siegen.de}}\\
	\normalsize{Geomathematics Group Siegen}}
\date{}
\begin{document}

\maketitle
\begin{abstract}
Gravitational field modelling is an important tool for inferring past and present dynamic processes of the Earth. Functions on the sphere such as the gravitational potential are usually expanded in terms of either spherical harmonics or radial basis functions (RBFs). The  (Regularized) Functional Matching Pursuit ((R)FMP) and its variants use an overcomplete dictionary of diverse trial functions to build a best basis as a sparse subset of the dictionary and compute a model, for instance, of the gravity field, in this best basis. Thus, one advantage is that the dictionary may contain spherical harmonics and RBFs. Moreover, these methods represent a possibility to obtain an approximative and stable solution of an ill-posed inverse problem, such as the downward continuation of gravitational data from the satellite orbit to the Earth's surface, but also other inverse problems in geomathematics and medical imaging. A remaining drawback is that in practice, the dictionary has to be finite and, so far, could only be chosen by rule of thumb or trial-and-error. In this paper, we develop a strategy for automatically choosing a dictionary by a novel learning approach. We utilize a non-linear constrained optimization problem to determine best-fitting RBFs (Abel--Poisson kernels). For this, we use the Ipopt software package with an HSL subroutine. Details of the algorithm are explained and first numerical results are shown.\\

\textit{Keywords:}
dictionary learning,  
downward continuation, 
greedy algorithm,
inverse problem, 
matching pursuit, 
nonlinear optimization, 
radial basis functions,
spherical harmonics\\

\textbf{AMS:}
31B20, 
41A45,
65D15, 
65J20,
65K10,
65N20,
65R32,
68T05,
86A22
\end{abstract}

\section{Introduction} \label{sec:intro}

The gravitational potential is an important observable in the geosciences as it is used as a reference for multiple static and dynamic phenomena of the complex Earth system. The EGM2008 gives us a high-precision model in spherical harmonics, i.e. polynomials, up to degree 2190 and order 2159, see \citep{EGM2008,Pavlisetal2012}. From satellite missions like GRACE or its successor GRACE-FO, we have time-dependent models of the potential, see, for example, \citep{Flechtneretal2014,GRACEdata,Schmidtetal2008,Tapleyetal2004}. These data enable a visualization of mass transports on the Earth such as seasonal short-term phenomena like the wet season in the Amazon basin as well as long-term phenomena like the climate change. Therefore, gravity field modelling and especially the downward continuation of satellite data is one of the major important mathematical problems in physical geodesy, see, for instance, \citep{Baur2014,Kusche2010}.

From a mathematical point of view, the gravitational potential $F$ on the approximately spherical Earth's surface can be modelled as a Fourier expansion in a suitable basis, for example in the mentioned spherical harmonics $Y_{n,j},\ n \in \nat_0,\ j=-n,...,n$. If we assume the Earth to be a closed unit ball, we obtain, for $\sigma>1$, a pointwise representation of the potential as 
\begin{align}
V(\sigma\eta) = (\T F) (\sigma\eta) = \sum_{n=0}^\infty \sum_{j=-n}^n \langle F, Y_{n,j} \rangle_{\Lp{2}} \sigma^{-n-1} Y_{n,j} \left( \eta \right) \label{eqpotential}
\end{align}	
for the unit sphere $\Omega$ and $\eta \in \Omega$, see, for example \citep{Baur2014,Freedenetal2004,Moritz2010,Telschow2014}. This gives us the potential in the outer space including a satellite orbit. The inverse problem of the downward continuation of this potential is given as follows: if data values $V(\sigma\eta) = (\T F) (\sigma\eta),\ \sigma>1$, are known, determine the function $F$ on $\Omega$. For more details on inverse problems in general, see the classical literature, for example, \citep{Engletal1996,Louis1989,Rieder2003}. The occurring mathematical challenges of the downward continuation are well-known. First of all, the operator $\T$ has exponentially decreasing singular values due to $\sigma>1$ in \eqref{eqpotential}. Thus, the inverse operator which we need for the downward continuation has exponentially increasing singular values. For this reason, the inverse problem is called exponentially ill-posed. In particular, it violates the third characteristic of a well-posed problem according to Hadamard (continuous dependence on the data). Furthermore, the existence of $F$ is only ensured if $V$ is in the range of $\T$. However, if $F$ exists, then it is unique.

Therefore, sophisticated algorithms need to be used to solve the problem of the downward continuation of satellite data. Previous studies showed that the (Regularized) Functional Matching Pursuit ((R)FMP), the (Regularized) Orthogonal Functional Matching Pursuit ((R)OFMP) as well as the latest (Regularized) Weak Functional Matching Pursuit ((R)WFMP) are possible approaches for this and other inverse problems, see, for instance, \citep{Berkeletal2011,Fischer2011,Fischeretal2012,Fischeretal2013_1,Fischeretal2013_2,Guttingetal2017,Kontak2018,Kontaketal2018_2,Kontaketal2018_1,Michel2015_2,Micheletal2017_1,Micheletal2014,Micheletal2016_1,Telschow2014}. In the sequel, we will write Inverse Problem Matching Pursuit (IPMP) if we refer to either one of the mentioned algorithms. Although the core routine of these algorithms is well established by now, there are still possibilities to improve their performance.

One of these possibilities is given due to the following circumstances. The IPMPs are based on a finite dictionary $\dic$ of suitable trial functions from which they build a best basis and eventually the approximate solution in terms of this best basis. Originally, matching pursuits utilize a dictionary consisting of vectors from finite-dimensional spaces. The first development of a matching pursuit was done by S.G. Mallat and Z. Zhang (\citeyear{Mallatetal1993}). The ROFMP is additionally based on works of P. Vincent and Y. Bengio (\citeyear{Vincentetal2002}) and Y.C. Pati et al. (\citeyear{Patietal1993}). The RWFMP inherits ideas from V.N. Temlyakov (\citeyear{Temlyakov2000}). The idea of using dictionaries instead of finding a representation of a signal in an a-priori given basis can be summed up as follows \citep{Mallatetal1993}: the human language gives us nearly infinite possibilities to describe the very same thing in the real world. However, these descriptions vary in length and rigour. This idea can be transferred to mathematics, for instance, gravity field modelling. We can model the gravitational potential in spherical harmonics as given in \eqref{eqpotential}. However, if we look for a model in a best basis from a dictionary $\dic$, the representation of the signal might be sparser and/or more precise. In particular, the reduction to those basis functions which are essential increases the interpretability of the obtained model. Further, numerical experiments showed that the obtained solution is more accurate and stable. These aims can be achieved by the IPMPs. 

Further characteristics of the IPMPs can be summed up from previous publications as follows: they represent a regularization for ill-posed inverse problems; they can combine different kinds of trial functions, e.g. global and localized ones, in their solution; %the choice of trial functions in the solution has a direct physical interpretation with respect to the application; 
they can be used for pure interpolation / approximation as well as for linear and non-linear ill-posed inverse problems; they work with single-source data as well as are capable of a joint-inversion of multiple-source data; the data can be given on different geometries, like a sphere, a ball, or an interval of the real line; they yield an approximative function and not a discretized approximation; they build this solution iteratively without the need to invert a matrix or solve a large linear system of equations; the orthogonal variant yields a linear combination of orthogonal trial functions; the runtime can be improved with the use of preprocessing of certain data; or they can be combined with a weak strategy to cut runtime without significant loss of accuracy; the implementation is easy and they can be parallelized very well. 

In the practical tests for diverse applications, very good approximations could be achieved not only for the downward continuation but also for other ill-posed inverse problems, for instance the (linear as well as the non-linear) inverse gravimetric problem or the inversion of MEG- and EEG-data, see, for instance, \citep{Fischer2011,Fischeretal2012,Kontak2018,Kontaketal2018_2,Leweke2018,Lewekeetal2018_3}. 

However, the experiments also revealed a sensitivity of the result regarding the choice of the dictionary, for example concerning the runtime and the convergence behaviour. Therefore, the main focus of this paper is on a first dictionary learning strategy for the downward continuation of gravitational potential data.

Previous works on dictionary learning considered discretized approximation problems. In this case, the dictionary can be interpreted as a matrix. The approaches aimed to obtain a solution of the approximation problem and a sparse dictionary matrix simultaneously. For more details, see, for example, \citep{Brucksteinetal2009,Pruente2008,Rubinsteinetal2010}.  

However, a particular feature of the IPMPs is that their solution is a linear combination of established trial functions. Neither do we want to discretize the dictionary elements, i.e. the trial functions, nor do we want to modify them. In the latter case, the comparability with traditional models in these trial functions would be lost. Furthermore, with the use of scaling functions and / or wavelets in the dictionary, the IPMP generates a solution in a multiscale basis. This allows a multiresolution analysis of the obtained model revealing hidden local detail structures as it was shown in, for example, \citep{Fischer2011,Fischeretal2012,Fischeretal2013_1,Fischeretal2013_2,Micheletal2014,Telschow2014}. Moreover, we do not only consider interpolation / approximation problems, but also ill-posed inverse problems. Thus, a dictionary matrix would not contain the basis elements themselves, but, for example, their upward continued values. Applying previous strategies, like, for instance, MOD or K-SVD, would only alter the upward continued values and leave us with the question of how to downward continue them. All in all, this shows that learning a dictionary for the IPMPs requires the development of a different strategy.

For a first approach to learning a dictionary, we concentrate on the RFMP as the basic IPMP in this paper. For this algorithm, we develop a procedure to determine a best basis for the gravitational potential from different types of infinitely many trial functions. We choose to learn dictionary elements from spherical harmonics and Abel--Poisson kernels as radial basis functions. In particular, while previously a discrete grid of centres of the RBFs had to be chosen a-priori, which could have put a bias in the obtained numerical result, we now allow every point on the unit sphere to be a centre of an RBF. Equally, the localization parameter of the Abel--Poisson kernel is now determined from an interval instead of a finite set. Our continuous, i.e. non-discrete, learning ansatz produces a 'best-dictionary' with which the RFMP can be run. We call this procedure the Learning (Regularized) Functional Matching Pursuit (L(R)FMP). The results show that the use of a learnt dictionary in the RFMP gives us a higher sparsity and better results with less storage demand. %Moreover, we see that the L(R)FMP itself is already able to produce an accurate and sparse approximation.

This paper is structured as follows. On the way to a detailed description of our learning approach, we define some fundamental basics in Section \ref{sec:basics}. We introduce the trial functions under investigation as well as a general form of a dictionary and the basic principles of the RFMP. With these aspects explained, we state our learning strategy in Section \ref{sec:learning}. We motivate its idea and explain how this is embedded into the established theory of dictionary learning. Then we define its routine and give necessary derivatives. These are derived in Appendix \ref{app}. We end Section \ref{sec:learning} by introducing some additional learning features that guide the learning process positively in practice. In Section \ref{sec:numerics}, we describe experiments for which we learn a dictionary and compare the results of the learnt dictionary with the results which a manually chosen dictionary yields. At last, we conclude this paper in Section \ref{sec:conandout} with an outlook of how we want to further develop this first learning approach.

\section{Some mathematical tools for learning a dictionary} \label{sec:basics}

\subsection{Trial functions under consideration for dictionaries} \label{subsec:trialfun}

First of all, to develop a learning strategy, we have to define what trial functions we want to consider as possible dictionary elements, i.e. what the learnt dictionary shall consist of. Our aim is to determine which well-known trial functions are most suitable for a problem at hand. For gravity field modelling, it is sensible to determine suitable spherical harmonics as well as Abel--Poisson kernels. Examples of those trial functions are given in Figure \ref{fig:functions}.

\begin{figure}[htbp]
	\includegraphics[trim= 16mm 31mm 11mm 23mm, clip,width=.49\linewidth]{./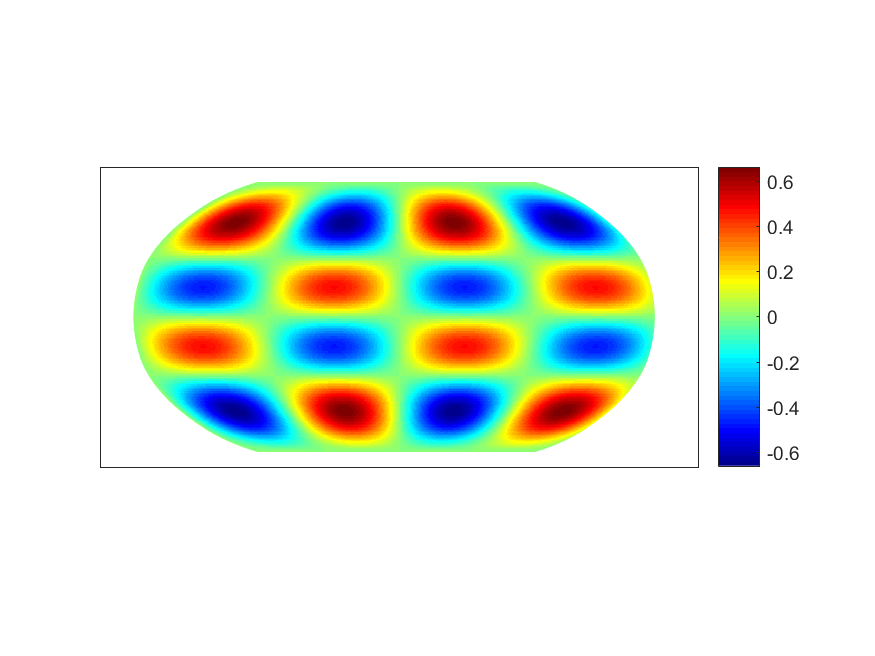}
	\includegraphics[trim= 16mm 31mm 11mm 23mm, clip,width=.49\linewidth]{./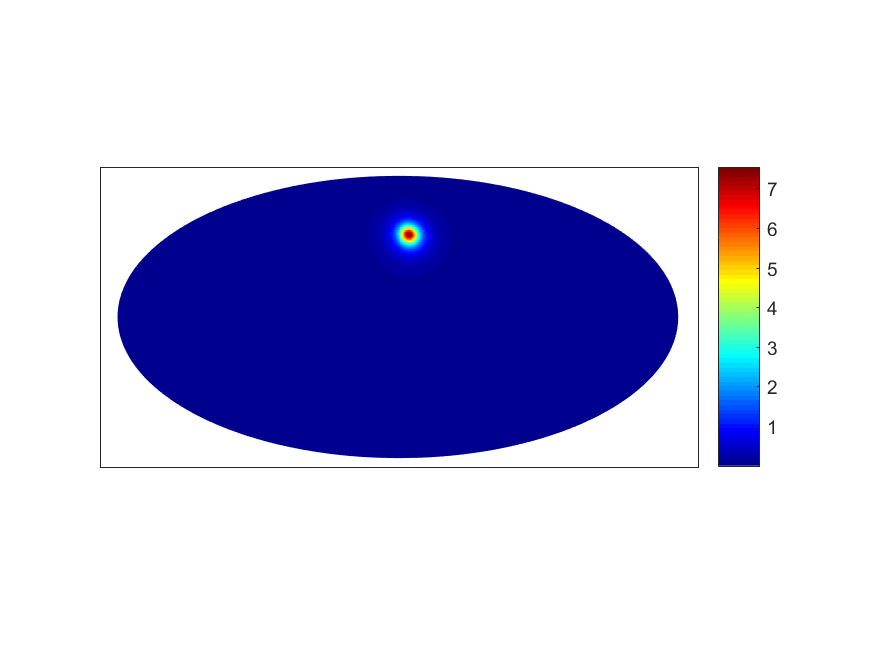}
	\caption{Examples of trial functions. Left: spherical harmonic. Right: Abel--Poisson kernel for a fixed centre $\xi$ (location of the maximum).} \label{fig:functions}
\end{figure}

Spherical harmonics or fully normalized spherical harmonics for practical purposes are global trial functions, see for instance, \citep{Freedenetal2013_1,Freedenetal2009,Freedenetal1998,Michel2013,Mueller1966}. 
An example is given on the left-hand side of Figure \ref{fig:functions}. They are defined for a unit vector $\xi \in \Omega$ as 
\begin{align}
Y_{n,j} \left( \xi(\lon,t) \right) \coloneqq \sqrt{\frac{(2n+1)}{4\pi} \frac{(n-|j|)!}{(n+|j|)!}} P_{n,|j|}(t)  \left\{ \begin{matrix}
\sqrt{2} \cos(j\lon), &j<0,\\
1, &j=0,\\
\sqrt{2} \sin(j\lon), &j>0,
\end{matrix} \right. \label{eqfnsh}
\end{align}
where $\xi(\lon,t)$ is the representation of $\xi \in \Omega$ in polar coordinates $(\lon,t)$, where $t=\cos \vartheta$ and $\vartheta$ is the latitude. Further, the definition uses associated Legendre functions given by
\begin{align}
P_{n,j}(t) \coloneqq \left( 1-t^2 \right)^{j/2} \frac{\mathrm{d}^j}{\mathrm{d}t^j} P_n(t), \qquad t \in [-1,1], \label{eqaLf}
\end{align}
where $P_n$ denotes the $n$-th Legendre polynomial. 

Abel--Poisson kernels are defined for a particular unit vector $\xi \in \Omega$ and a scaling parameter $h \in [0,1)$  as (with $x=h\xi$)
\begin{align}
P(x,\eta) \coloneqq \frac{1-|x|^2}{4\pi(1+|x|^2 - 2x \cdot \eta)^{3/2}} 
= \sum_{n=0}^\infty \frac{2n+1}{4\pi} |x|^n P_n\left( \frac{x}{|x|} \cdot \eta \right) \label{eqAPk}
\end{align}
for any unit vector $\eta \in \Omega$, see, for example, \citep[][pp.~108-112]{Freedenetal1998} or \citep[][p.~103~and~441]{Freedenetal2009}. These kernels are more localized than polynomials as one can see on the right-hand side of Figure \ref{fig:functions}. It is visible that they are radial basis functions, that means they have one maximum whose descent depends on the distance to the centre $\xi = x/|x|$ of the extremum. In that way, they are zonal functions and can be viewed as 'hat'-functions. Dependent on the parameter $h = |x|$, the size of the extremum or 'hat' varies in size. Thus, the functions have different scales of localization. For more details and examples, see, for instance, \citep[][p.~111]{Freedenetal1998} or \citep[][p.~117]{Michel2013}.

In this paper, we consider dictionaries consisting of spherical harmonics and Abel--Poisson kernels. We introduce here a notation for building blocks of spherical dictionaries.

\begin{defin} \label{def:dic}
Let $N \subset \mathcal{N} \coloneqq \{(n,j)\ |\ n \in \nat_0, j=-n,...,n\}$ and $ K \subseteq \mathring{\ball}_1(0)$ for the open unit ball $\mathring{\ball}_1(0)$. Then we set 
\begin{align}
[N]_\mathrm{SH} &\coloneqq \{Y_{n,j}\ |\ (n,j) \in N \}
\intertext{for spherical harmonics $Y_{n,j}$ and}
[K]_\mathrm{APK} &\coloneqq \{P(r\xi,\cdot)\ |\ r\xi \in K\}
\intertext{for Abel--Poisson kernels $P(r\xi,\cdot)$. We define a dictionary as}
\dic &\coloneqq [N]_\mathrm{SH} + [K]_\mathrm{APK} \coloneqq [N]_\mathrm{SH} \cup [K]_\mathrm{APK}.
\end{align}
We call $[\cdot]_\ast$ a trial function class.
\end{defin}
Note that $N$ and $K$ may be finite or infinite. 

\subsection{Basic principles of linear ill-posed inverse problems} \label{sebsec:iiprobs}

This subsection is mainly based on \citep{Engletal1996,Kontak2018,Michel2013,Rieder2003}. First of all, we recall the definition of a linear inverse problem.

\begin{defin}\label{def:invprob}
Let $\mathcal{X},\ \mathcal{Y}$ be Hilbert spaces. Further, let $\T \colon \mathcal{X} \to \mathcal{Y}$ be a linear and continuous operator between them. At last, let $y \in \mathcal{Y}$ denote the data and $F \in \mathcal{X}$ the desired solution. Then a problem of the form 
\begin{align}
\T F = y \label{eqinvprob}
\end{align}
is called a linear inverse problem.
\end{defin}

Naturally, some fundamental mathematical questions about the problem arise. Those questions lead to the definition of well- and ill-posedness of the linear inverse problem by Hadamard.

\begin{defin} \label{def:Hadamard}
A linear inverse problem $\T F = y$ is called well-posed if it fulfils the following three properties.
\begin{compactitem}
\item[(a)] For each $y$, there exists a solution $F$.
\item[(b)] The solution $F$ is unique.
\item[(c)] The inverse operator $\T^{-1}$ is continuous, i.e. the solution $F$ is stable.
\end{compactitem}
If any of these properties is violated, the problem is called ill-posed.
\end{defin}

As we explained in the introduction, the problem of the downward continuation of satellite data for gravity field modelling is a linear ill-posed inverse problem. As most inverse problems from practice are ill-posed, there exists a large theory on how to still solve these problems. In the sequel, we will use the approach by Tikhonov.

Its idea is to find the best approximate solution instead of the true solution. The ill-posedness is treated with an additional penalty term. The best approximate solution is the minimizer of the Tikhonov functional for a vanishing regularization parameter. The functional is given as follows. 

\begin{defin} \label{def:Tikhgen}
Let $\T F = y$ be a linear inverse problem. The Tikhonov functional $\mathcal{J} \colon \mathcal{X} \to \real$ is given by 
\begin{align}
\mathcal{J} (F;\T,  \lambda,y) \coloneqq \| y - \T F \|^2_{\mathcal{Y}} + \lambda \| F \|^2_{\mathcal{X}}. \label{eqTikhgen}
\end{align}
\end{defin}

In the case of satellite data, we set $\mathcal{Y} = \real^\ell$, i.e. we are only given data on $\ell$ discrete points of the unit sphere. For the space $\mathcal{X}$, we propose to use a Sobolev space. The reasons for this choice are, for instance, that this space enforces more smoothness of the solution than, e.g. the $\Lp{2}$-space. This has proven to yield better results. Further, it was shown that the IPMPs also profit theoretically from the use of this space, see, \citep{Kontak2018}. Specifically, we will use the Sobolev space $\SBN_2$.

\begin{defin} \label{def:H2}
On the set $\widetilde{\SBN}\left( (n+0.5)^2; \Omega \right)$ of all functions $F \in \mathrm{C}^{(\infty)}(\Omega,\real)$ that fulfil
\begin{align}
\sum_{n=0}^{\infty} \sum_{j=-n}^{n} (n+0.5)^4 \langle F, Y_{n,j} \rangle_{\Lp{2}}^2 < \infty,
\label{eqH2}
\end{align}
we define an inner product via 
\begin{align}
\langle F, G \rangle_{\SBN_2} \coloneqq \sum_{n=0}^{\infty} \sum_{j=-n}^{n} (n+0.5)^4 \langle F, Y_{n,j} \rangle_{\Lp{2}} \langle G, Y_{n,j} \rangle_{\Lp{2}}.
\label{eqH2IP}
\end{align}
The completion of $\widetilde{\SBN}\left( (n+0.5)^2; \Omega \right)$ with respect to $\langle \cdot, \cdot \rangle_{\SBN_2}$ is called the Sobolev space $\SBN_2$.
\end{defin}

For practical purposes, we will give a short overview of the main principles of the RFMP algorithm, which is a regularization method for ill-posed linear problems, next. For more details on any IPMP, see, for example, \citep{Fischer2011,Fischeretal2012,Fischeretal2013_1,Fischeretal2013_2,Kontak2018,Kontaketal2018_1,Kontaketal2018_2,Micheletal2017_1,Micheletal2014,Micheletal2016_1,Telschow2014}. For theoretical discussions of the IPMPs, we refer to this literature. Here, we will concentrate on the practical aspects of the RFMP. 

The underlying idea of this matching pursuit is to build a solution as a linear combination of dictionary elements by iteratively minimizing a Tikhonov functional. In theory, we consider the linear inverse problem $\T F = V$, for instance, as given in \eqref{eqpotential}. In practice, we consider the particular case $\mathcal{Y} = \real^\ell$. Then the problem formulates as follows. We have a relative satellite height $\sigma>1$, a set of grid points $\{\eta^{(i)}\}_{i=1,...,\ell} \in \Omega$ and data values $y_i$ for each grid point $\eta^{(i)}$. The operator $\T$ is exchanged by a finite system of related functionals $\T^i_\daleth$ for which $\T^i_\daleth F = (\T F)(\sigma\eta^{(i)}) = y_i$ holds for $i=1,...,\ell$. We use the Hebrew letter Dalet $\daleth$ to emphasize that the functionals $\left( \T^i_\daleth \right)_{i=1,...,\ell}$ represent a discretization of the operator $\T$. Summarized, we consider the linear inverse problem $\T_\daleth F = y$ for the operator $\T_\daleth = (\T^i_\daleth)_{i=1,...,\ell}$ and $y \in \real^\ell$.

Further, we assume that $F \in \SBN_2$. A regularization parameter is denoted by $\lambda$. Additionally, we need an a-priori defined dictionary $\dic$ as given in Definition \ref{def:dic}. Then the aim of the RFMP is to iteratively minimize the Tikhonov functional 
\begin{align}
\mathcal{J}(F_n+\alpha d; \T_\daleth, \lambda, y) \coloneqq \| y - \T_\daleth (F_n+\alpha d)\|^2_{\real^\ell} + \lambda \|F_n + \alpha d\|^2_{\SBN_2} \label{eqTikh}
\end{align}
for an element $d \in \dic$ of the dictionary, a real coefficient $\alpha$ and a current approximation $F_n$. In practice, this means we start with an initial approximation $F_0$, e.g. $F_0 \equiv 0$, and iteratively determine $F_{n+1} \coloneqq F_n + \alpha_{n+1}d_{n+1}$ via
\begin{align}
(\alpha_{n+1}, d_{n+1} ) \coloneqq \argmin_{(\alpha,d) \in \real \times \dic} \mathcal{J}(F_n+\alpha d; \T_\daleth, \lambda, y). \label{eqargmin}
\end{align}
It can be shown, see, for example, \citep{Fischer2011,Michel2015_2,Micheletal2014} that the minimization with respect to $\alpha$ and $d$ of the Tikhonov functional \eqref{eqargmin} is equivalent to a maximization of a certain quotient with respect to $d$. For the RFMP, this quotient is given by
\begin{align}
d_{n+1} \coloneqq \argmax_{d \in \dic} \frac{ \left( \left\langle R^n, \T_\daleth d \right\rangle_{\real^\ell} - \lambda \left\langle F_n , d \right\rangle_{\SBN_2} \right)^2}{\| \T_\daleth d \|^2_{\real^\ell} + \lambda \| d \|^2_{\SBN_2} }. \label{eqmaxRFMP}
\end{align}
in the $n$-th iteration step, where $R^n \coloneqq y - \T_\daleth F_n$ is the residual. 

\section{The learning approach} \label{sec:learning}

In this section, we refer to a linear inverse problem $\T_\daleth F = y$ for $y \in \real^\ell$ as described above. The term $F_n$ represents the current approximation of the RFMP at iteration step $n$. A dictionary element is denoted by $d$, spherical harmonics by $Y_{n,j}$ and Abel--Poisson kernels by $P(x,\cdot)$ as we introduced them in the last section. 

\subsection{About learning dictionaries for inverse problems} \label{subsec:learndic}

We motivate our learning algorithm as follows. We consider an infinite set of trial functions from which we want to learn dictionary elements in the LRFMP. We set 
\begin{align}
\dic^{\mathrm{inf}} = [\mathcal{N}]_\mathrm{SH} + \left[\mathring{\ball}_1(0)\right]_\mathrm{APK}.
\end{align}

Thus, for $F_n = \sum_{i=1}^n \alpha_i d_i$, we consider 
\begin{align}
\argmin_{(\alpha_{n+1},d_{n+1}) \in \real \times \dic^{\mathrm{inf}}}\ \left( \| y - \T_\daleth (F_n + \alpha d) \|^2_{\real^\ell} + \lambda \| F_n + \alpha d\|^2_{\SBN_2} \right). \label{eqminLRFMP-TF-1}
\end{align}

If we chose the dictionary element $d_i$ and its coefficient $\alpha_i$ in this greedy manner in each iteration step, in a perfect world and for $n \to \infty$, then this would be equal to 
\begin{align}
\min_{F_\infty \in \ \overline{\hull \dic^{\mathrm{inf}}}} \left( \| y - \T_\daleth F_\infty \|^2_{\real^\ell} + \lambda \| F_\infty \|^2_{\SBN_2} \right), \qquad F_\infty = \sum_{i=1}^\infty \alpha_i d_i. \label{eqminLRFMP-TF-2}
\end{align}
Unfortunately, this is practically impossible to solve for several reasons. First of all, the problem is of the type of the travelling salesman problem which is known to be NP-hard, see, for instance, \citep[p.~114]{Gareyetal2009}. Thus, we simply cannot be sure that picking trial functions in a greedy manner also yields a true greedy algorithm and, thus, the optimal solution $F_\infty$. Secondly, in practice, we can only compute a finite linear combination, i.e. an optimal best-$n$-term approximation if at all. And thirdly, if we work with infinitely many trial functions, we cannot preprocess any data. However, it is an expensive feature to compute everything needed on the fly.

However, if we consider the RFMP, we know that its solution is a good approximation of 
\begin{align}
\min_{F_n \in \ \hull \dic} \left( \| y - \T_\daleth F_n \|^2_{\real^\ell} + \lambda \| F_n \|^2_{\SBN_2} \right), \qquad F_n = \sum_{i=1}^n \alpha_i d_i \label{eqminRFMP}
\end{align}
for a finite dictionary $\dic$. Note that the approximation is also a $n$-term approximation. That means, the structure of the RFMP yields a good approximation of the best-$n$-term solution dependent on a fixed finite dictionary. Therefore, if we extend the RFMP to the infinite set of functions $\dic^\mathrm{inf}$ in the way defined in \eqref{eqminLRFMP-TF-1}, we are able to approximate a solution $F_n$ of
\begin{align}
\min_{F_n \in \ \hull \dic^\mathrm{inf}} \left( \| y - \T_\daleth F_n \|^2_{\real^\ell} + \lambda \| F_n \|^2_{\SBN_2} \right),\qquad F_n = \sum_{i=1}^n \alpha_i d_i. \label{eqminLRFMP-TF-3}
\end{align}
This is the main objective of the LRFMP in practice. How can we learn a finite dictionary $\dic$ from this solution? If we have approximated the solution by $F_n$, we know which basis elements we need for this approximation. In this way, we know which dictionary elements should be at least in the learnt finite dictionary $\dic$ for its use in the RFMP.  

Therefore, a learnt dictionary should contain at least these $n$ elements. More generally, we can set an upper bound $D > n$ for the size of the learnt dictionary $\dic$. Furthermore, the approximation of $F_n$ should be in the span of the learnt dictionary $\dic$, such that the solution of \eqref{eqminLRFMP-TF-3} can be reproduced. Moreover, the dictionary $\dic$ naturally needs to be a subset of $\dic^\mathrm{inf}$. Taking all things into consideration, we can write the objective of the dictionary learning process as 
\begin{align}
\min_{\substack{\dic \subset \dic^\mathrm{inf},\\ |\dic| \leq D}}\ \min_{F_n \in \ \hull \dic} \left( \| y - \T_\daleth F_n \|^2_{\real^\ell} + \lambda \| F_n \|^2_{\SBN_2} \right). \label{eqminLIPMP-TF-4}
\end{align}	
This is a doubled minimization problem for a predefined dictionary size $D$. From another point of view, the task it presents is to find a dictionary and build an approximation from its elements that minimizes the Tikhonov functional simultaneously. A doubled minimization problem is a common approach in the field of dictionary learning, see, for instance, \citep{Aharonetal2006,Brucksteinetal2009,Pruente2008,Rubinsteinetal2010}.
Further, referring to the criteria for dictionary learning from Aharon et. al. (\citeyear{Aharonetal2006}), this can be thought of as a well-defined goal of the learning approach. 

Thus, the starting point for our learning algorithm is \eqref{eqminLRFMP-TF-1}. We choose this generalization for the learning approach because we want to learn the dictionary itself as well as maintain the unique characteristic of the RFMP to combine various but established trial functions. Moreover, for this minimization, we choose to follow the structure of the RFMP for two reasons. First of all, as we pointed out above, the RFMP is built to solve the same minimization problem but over a finite set of functions. Secondly, we search for a well-working dictionary for the RFMP. It would be intuitive to expect better results when applying the learnt dictionary if we included the behaviour of the RFMP in the learning process. In this way, we use as much information and structure as we have to obtain an efficient learning routine. Hence, the idea of our learning algorithm, the LRFMP, is to iteratively minimize a Tikhonov functional over an infinite set of trial functions similarly to the RFMP in order to model the solution in a best basis. The chosen basis elements form the learnt dictionary which can be applied in runs of the RFMP.

\newpage
\subsection{A first learning algorithm} \label{subsec:learningstruct}

\begin{wrapfigure}{l}{.6\linewidth}
\includegraphics[scale=0.6]{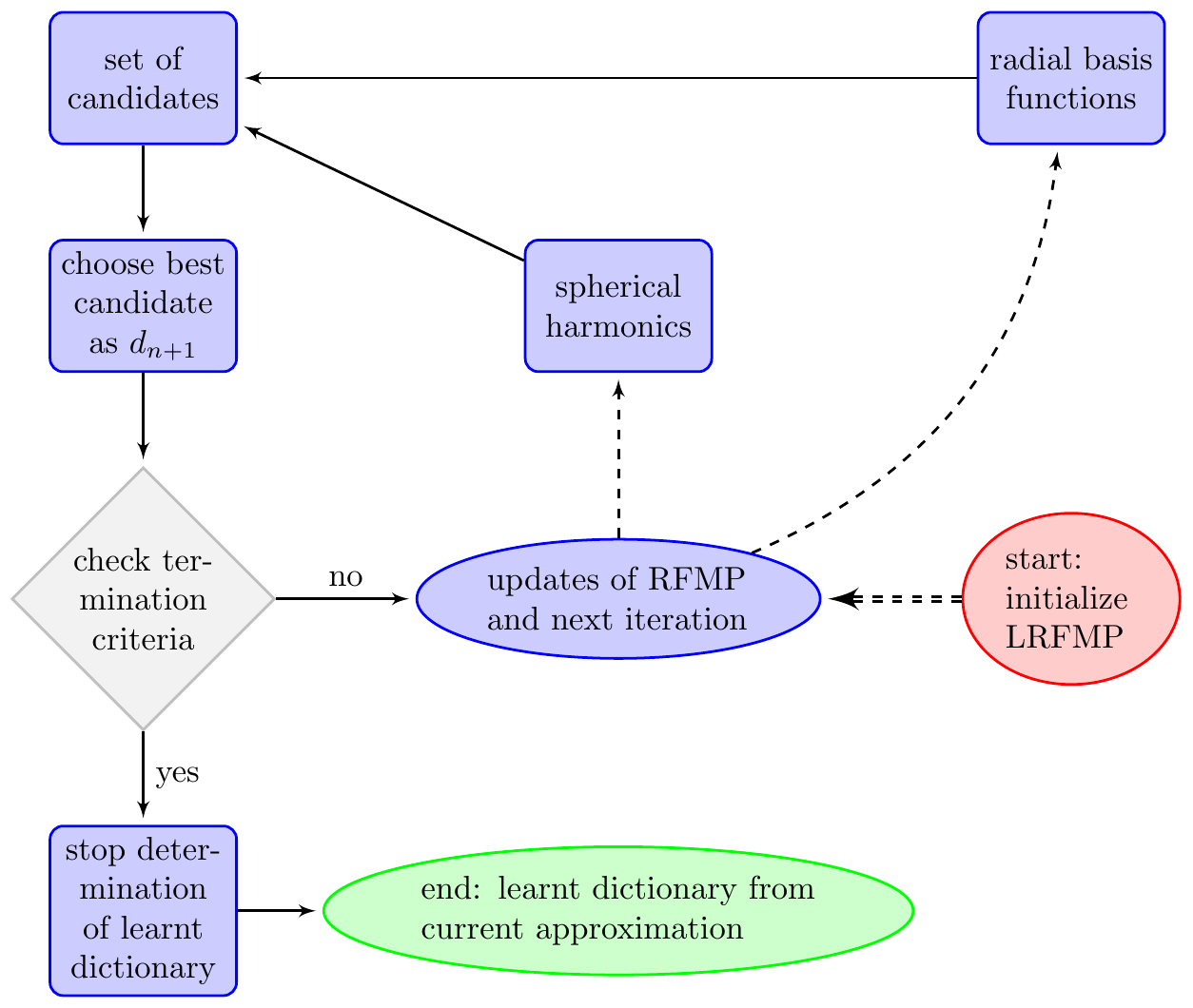}
\caption{Schematic representation of the basic learning algorithm.}
\label{fig:structLIPMP}
\end{wrapfigure}

An overview of the structure of the learning algorithm is shown in Figure \ref{fig:structLIPMP}. We start in the red circle ('start') where we initialize the LRFMP similarly to the initialization which the RFMP needs. This means, the initialization includes the necessary preprocessing and setting of parameters (similarly as described, for example, in \citep{Telschow2014}) as well as setting parameters for the learning. The latter learning parameters include, most importantly, a starting dictionary and smoothing properties. 

Then we step into the first iteration in which we want to minimize the Tikhonov func\-tion\-al in order to find $d_1$. As we also want to learn a dictionary, the steps up to choosing $d_1$ differ from the established RFMP: we choose $d_1$ from (in the case of the RBFs) infinitely many trial functions instead of a finite a-priori selection of trial functions. This is done by first computing a candidate for $d_1$ among each trial function class we consider. In Figure \ref{fig:structLIPMP}, this is shown by the boxes 'spherical harmonics' and 'radial basis functions' which lead to 'set of candidates'. Mathematically, in this step, we seek 
\begin{align}
&\left( \alpha^{\class}_{1}, d^{\class}_{1} \right) = \argmin_{(\alpha,d)\ \in \ \class \times \real} \left( \| y - \T_\daleth (F_0 + \alpha d) \|^2_{\real^\ell}  + \lambda \| (F_0 + \alpha d) \|^2_{\SBN_2} \right), \label{eqminLIPMPclasses-1}
\end{align}
where $\class$ denotes one trial function class, i.e. $\class = [\mathcal{N}]_\mathrm{SH}$ the set of spherical harmonics or $\class = [\mathring{\ball}_1(0)]_\mathrm{APK}$ the set of Abel--Poisson kernels. Then we have again a finite (but optimized) set of trial functions and can choose $d_1$ from this set of candidates in the common fashion of the RFMP by comparing how well each one minimizes the Tikhonov functional. The candidate that minimizes the Tikhonov functional among all candidates is chosen as $d_1$ ('choose best candidate as $d_{n+1}$').

Then we compute the necessary updates of the RFMP-routine as described, for example, in \citep{Telschow2014}. Next, we check the termination criteria for the learning algorithm. We adopt the termination criteria of the RFMP which are up to now the norm of the residual or the size of the currently chosen coefficient (this would be $\alpha_1$ at this stage) being smaller than a given threshold or a maximal number of iterations. Either they are not fulfilled, then we search for the next element $d_2$ in the same manner as we found $d_1$. Or we have a fulfilled termination criterion. In this case, we stop the RFMP and, thus, the learning of the dictionary. We obtain the same output as in a non-learning RFMP which is an approximation of the given signal. Additionally, the learnt dictionary is defined as the set of all chosen elements in this approximation. For the sake of completeness, note that for an arbitrary iteration step $n$, the objective to seek $\alpha_{n+1}$ and $d_{n+1}$ is given by 
\begin{align}
&\left( \alpha^{\class}_{n+1}, d^{\class}_{n+1} \right) = \argmin_{(\alpha, d)\ \in \ \real \times \class} \left( \| y - \T_\daleth (F_n + \alpha d) \|^2_{\real^\ell}  + \lambda \| (F_n + \alpha d) \|^2_{\SBN_2} \right), \label{eqminLIPMPclasses-n}
\intertext{for}
&\class = [\mathcal{N}]_{\mathrm{SH}} \textrm{ and } \class = \left[ \mathring{\ball}_1(0)\right]_{\mathrm{APK}}, 
\end{align}
respectively.

Before we can state the learning algorithm itself, we first have to define an objective function for the determination of the candidates. 

\begin{defin} \label{def:objfunRFMP}
For the Sobolev space $\SBN_2$, we define the objective function of the RFMP in the $n$-th iteration step as 
\begin{align}
\RFMP (d;n) &\coloneqq \frac{ \left( \left\langle R^n, \T_\daleth d \right\rangle_{\real^\ell} - \lambda \left\langle F_n , d \right\rangle_{\SBN_2} \right)^2}{\| \T_\daleth d \|^2_{\real^\ell} + \lambda \| d \|^2_{\SBN_2} }, \label{eqobjfunRFMP}
\end{align}
where $d$ is a trial function, $R^n$ is the current residual, $F_n$ the current approximation and $\T, \lambda$ depend on the linear inverse problem.
\end{defin}

\begin{theo} \label{th:equivminmax}
The minimization of the Tikhonov functional in the $n$-th step of the RFMP with respect to a trial function $d$ and a real coefficient $\alpha$ as seen in \eqref{eqTikh} is equivalent to the maximization of $\RFMP(\cdot;n)$ with respect to a trial function $d$.
\end{theo}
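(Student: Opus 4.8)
The plan is to reduce the joint minimization over $(\alpha,d)$ to a one-dimensional minimization over $\alpha$ for each fixed trial function $d$, and then to read off from the resulting expression that the remaining minimization over $d$ is exactly the maximization of $\RFMP(\cdot;n)$.

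First I would introduce the residual $R^n \coloneqq y - \T_\daleth F_n$ and expand the Tikhonov functional \eqref{eqTikh} as a polynomial in the real variable $\alpha$. Using the bilinearity of $\langle\cdot,\cdot\rangle_{\real^\ell}$ and $\langle\cdot,\cdot\rangle_{\SBN_2}$ one obtains
\begin{align*}
\mathcal{J}(F_n + \alpha d; \T_\daleth, \lambda, y) = a(d)\,\alpha^2 - 2\,b(d)\,\alpha + c,
\end{align*}
where $a(d) \coloneqq \| \T_\daleth d \|^2_{\real^\ell} + \lambda \| d \|^2_{\SBN_2}$, $b(d) \coloneqq \langle R^n, \T_\daleth d \rangle_{\real^\ell} - \lambda \langle F_n, d \rangle_{\SBN_2}$, and $c \coloneqq \| R^n \|^2_{\real^\ell} + \lambda \| F_n \|^2_{\SBN_2}$ does not depend on $d$. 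Since $a(d) > 0$ (justified below), this is a strictly convex quadratic in $\alpha$, so its unique minimizer is $\alpha^\ast(d) = b(d)/a(d)$ and the minimal value equals $c - b(d)^2/a(d) = c - \RFMP(d;n)$, recognizing the quotient in Definition \ref{def:objfunRFMP}.

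Next I would minimize over $d$. Because $c$ is a constant independent of $d$,
\begin{align*}
\min_{d}\ \min_{\alpha}\ \mathcal{J}(F_n + \alpha d; \T_\daleth, \lambda, y) = c - \max_{d}\ \RFMP(d;n),
\end{align*}
so the pair $(\alpha,d)$ minimizing \eqref{eqTikh} is precisely $\bigl(\alpha^\ast(d_{n+1}), d_{n+1}\bigr)$ with $d_{n+1}$ a maximizer of $\RFMP(\cdot;n)$; this is the claimed equivalence, and it simultaneously yields the closed-form expression for the optimal coefficient $\alpha_{n+1}$ used by the RFMP in practice.

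The one point that needs care — and the only genuine obstacle — is the strict positivity of the denominator $a(d) = \| \T_\daleth d \|^2_{\real^\ell} + \lambda \| d \|^2_{\SBN_2}$, since this is what guarantees that the $\alpha$-minimization has a finite, unique solution and that $\RFMP(d;n)$ is well defined. For $\lambda > 0$ this follows from the fact that $\|\cdot\|_{\SBN_2}$ is a norm and every dictionary element (a spherical harmonic $Y_{n,j}$ or an Abel--Poisson kernel $P(x,\cdot)$) is nonzero; for $\lambda = 0$ one would additionally have to exclude trial functions lying in the kernel of $\T_\daleth$. I would therefore state the standing hypothesis $\lambda > 0$ (consistent with the regularization setting throughout the paper) explicitly so that this subtlety is handled cleanly.
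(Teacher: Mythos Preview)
Your argument is correct and is exactly the standard quadratic-in-$\alpha$ reduction that the paper has in mind: the paper itself does not spell out the proof but simply refers to the earlier works \citep{Fischer2011,Michel2015_2}, where precisely this computation (expand, minimize over $\alpha$, obtain $c - b(d)^2/a(d)$, then maximize the quotient over $d$) is carried out. Your explicit treatment of the positivity of the denominator $a(d)$ is a useful addition that the paper leaves implicit.
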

\begin{proof}
The proof is analogous to the corresponding proofs in \citep{Fischer2011,Michel2015_2}.
\end{proof}

All in all, we can now state the learning algorithm. We will explain the steps of this algorithm in further detail below. Note that we determine a preliminarily optimal Abel--Poisson kernel from a discretely parametrized dictionary $[ \hat{K} ]_{\mathrm{APK}}$ and use this as a starting point for the optimization procedure for a continuously parametrized ansatz which uses $[ \mathring{\ball}_1(0)]_{\mathrm{APK}}$.

\begin{algo} \label{algo:LRFMPmain}
We obtain a learnt dictionary for the RFMP as follows. Let $\T_\daleth F = y$ be the linear inverse problem under investigation, $\SBN_2$ the Sobolev space from Definition \ref{def:H2} and $\lambda$ the regularization parameter.\\

\begin{compactitem}
\item[(S0)] initialize: at least one termination criterion (maximal number of iterations $I$ and/or prescribed accuracy of the relative data error and/or minimal size of the coefficients); data vector $y$; initial approximation $F_0$; sets $\hat{N}$ as well as $\hat{K}$ as in Definition \ref{def:dic} and starting dictionary $\dic = [\hat{N}]_\mathrm{SH} + [\hat{K}]_\mathrm{APK}$
\item[(S1)] set $R^0 \coloneqq y-\T_\daleth F_0$ and compute $\langle F_0, d_i \rangle_{\SBN_2}$ for $d_i \in \dic$.
\item[(S2)] compute $\T_\daleth d$ for each $d \in \dic$ evaluated at the data points of $y$ and $\langle d_i, d_j \rangle_{\SBN_2}$ for each two $d_i,\ d_j \in \dic$.
\item[(S3)] while (termination not fulfilled)
	\begin{compactitem}
	\item[(S3.1)] compute candidate $$d_{n+1}^{\mathrm{SH}} = \argmax \left\{\RFMP(d;n)\ \Big|\ d \in \left[\hat{N}\right]_\mathrm{SH}\right\}$$
	\item[(S3.2)] compute local solution $$d_{n+1}^{\mathrm{APK,loc}} = \argmax \left\{\RFMP(d;n)\ \Big|\ d \in \left[\hat{K}\right]_\mathrm{APK}\right\}$$
	\item[(S3.3)] compute optimal solution $$d_{n+1}^{\mathrm{APK}} = \argmax \left\{\RFMP(d;n)\ \Big|\ d \in \left[\mathring{\ball}_1(0)\right]_\mathrm{APK}\right\}$$
	\item[(S3.4)] choose 
	\begin{align}	
	d_{n+1} = \argmax &\left\{ \RFMP\left(d_{n+1}^{\mathrm{SH}};n \right),\ \right. \\
	& \qquad  \left. \RFMP\left(d_{n+1}^{\mathrm{APK,loc}};n\right),\ \RFMP\left(d_{n+1}^{\mathrm{APK}};n\right) \right\}
	\end{align}
	\item[(S3.5)] compute $$\alpha_{n+1} = \frac{ \left\langle R^n, \T_\daleth d_{n+1} \right\rangle_{\real^\ell} - \lambda \left\langle F_n , d_{n+1} \right\rangle_{\SBN_2} }{\| \T_\daleth d_{n+1} \|^2_{\real^\ell} + \lambda \| d_{n+1} \|^2_{\SBN_2} }$$
	\item[(S3.6)] set $R^{n+1} \coloneqq R^n - \alpha_{n+1}\T_\daleth d_{n+1}$
	\item[(S3.7)] for $d_i \in \dic$ compute $\langle F_{n+1}, d_i \rangle_{\SBN_2} = \langle F_n, d_i \rangle_{\SBN_2} + \alpha_{n+1}\langle d_{n+1}, d_i \rangle_{\SBN_2}$
	\item[(S3.8)] increase $n$ by $1$
	\end{compactitem}
\item[(S4)] result: approximation $F_M = \sum_{i=1}^M \alpha_i d_i$ after iteration step $M$ at termination; learnt dictionary 
\begin{align} 
\dic^* &= \left[N^* \right]_\mathrm{SH} + \left[K^*\right]_\mathrm{APK},\\
N^* &=\{(n_i,j_i)\ |\ \textrm{there exists } i \in \{1,...,M\} \textrm{ such that } Y_{n_i,j_i} = d_i\},\\
K^* &= \{(r\xi)^{(i)}\ |\ \textrm{there exists } i \in \{1,...,M\} \textrm{ such that } P((r\xi)^{(i)},\cdot) = d_i\}
\end{align}
\end{compactitem}
\end{algo}

\subsection{Determination of candidates} \label{subsec:detercand}

For practice, the question remains how the candidates in each trial function class under consideration are determined. We need to explain what is done in S3.1 to S3.3.

First of all, we consider the determination of a spherical harmonic candidate. We want to seek the best-fitting function among all spherical harmonics up to a certain degree $\nu \in \nat$. We have to learn the size of $\nu$. It is defined in Algorithm \ref{algo:LRFMPmain} which specific spherical harmonics with a degree up to $\nu$ we insert into the learnt dictionary.

The idea is to allow the choice of spherical harmonics up to a degree $\widetilde{N} \in \nat$ (i.e. $\hat{N} = \{(n,j)\ |\ n \in \nat_0,\ n\leq\widetilde{N},\ j=-n,...,n\}$) which is probably not chosen in practice. 
For example, the data resolution can provide a threshold up to which a resolution appears to be realistic (as it is also done for other gravitational models like EGM or models based on CHAMP, GRACE and GOCE data).
If the LRFMP chooses only spherical harmonics with a lower degree $\nu$, we have a truly learnt bound $\nu < \widetilde{N}$. However, note that the higher we choose $\widetilde{N}$, the more expensive is the preprocessing of the algorithm. The candidate $d_{n+1}^{\mathrm{SH}}$ which the LRFMP chooses in each iteration step can be chosen as in the RFMP by comparing $\RFMP(Y_{m,k};n)$ for all spherical harmonics up to degree $\widetilde{N}$. 

Unfortunately, for meaningful results with respect to learning spherical harmonics, we are lacking appropriate data sets. Both the EGM2008 as well as the GRACE data are given to us only as coefficients for a representation in spherical harmonics. Thus, if we allow the LRFMP to choose an optimal spherical harmonic with an arbitrarily high degree, naturally, the algorithm tends to choose more spherical harmonics than other trial functions. If we were able to work with data not given in this polynomial representation, we would assume that this effect diminishes. 

Next, we consider the determination of the candidate $d_{n+1}^{\mathrm{APK}}$ from the Abel--Poisson kernels in S3.3. In this case, the minimization of the Tikhonov functional is modelled as a non-linear constrained optimization problem. The solution of this problem yields the respective candidate. Note that we do not seek the minimizer of a function, but the minimizer of a functional among a set of functions. Therefore, we have to define the trial functions dependent on their characteristics as we did in \eqref{eqAPk}. 

The model of the optimization problem is given as follows. Due to Theorem \ref{th:equivminmax}, in the $n$-th step of the LRFMP, we consider the optimization problem
\begin{align}
\RFMP (P(x,\cdot);n) \to \max! \label{eqminprobRFMP-1}
\end{align} 
for learning a dictionary for the RFMP. The optimization with respect to a trial function $d$ can be modelled by an optimization with respect to the characteristics of each trial function. However, these characteristics yield a constraint for the optimization problem.  Abel--Poisson kernels are given as $K_h(\xi \cdot) = P(h\xi, \cdot),\ h \in [0,1)$ and $\xi \in \Omega$ in \eqref{eqAPk}, see, for example, \citep[][p.~132]{Freedenetal2013_2} or \citep[][p.~52]{Freedenetal2004}. Here, $\xi$ is the centre of the radial basis function and $h$ is the parameter which controls the localization. Therefore, the kernels are well-defined only in the interior of the unit ball and the constraint is given by $\| x \|^2_{\real^3} < 1$ for $x=h\xi$.

\begin{defin}
The optimal candidate $d_{n+1}^{\mathrm{APK}}$ among the set of Abel--Poisson kernel $$\left\{P(x,\cdot)\ |\ x \in \mathring{\ball}_1(0)\right\}$$ is given by the solution of the optimization problem
\begin{align}
\RFMP(P(x,\cdot);n) \to \max ! \qquad s.t. \qquad \| x \|^2_{\real^3} < 1.
\end{align}
\end{defin}

Note that the maximizer is not necessarily unique. In this case, we use one representative among the maximizers.

We prefer a gradient-based approach. Thus, we have to compute the derivatives of \eqref{eqobjfunRFMP} with respect to $x \in \real^3$. In general, this can be done by applying the quotient rule and computing the derivatives of the inner products and norms in the de-/nominator separately. We state the results of this derivation at this point. A detailed derivation is given in Appendix \ref{app}.

\begin{theo} \label{th:abbrevs}
We define some abbreviation terms and state their derivatives. Let $R^n$ be the current residual of size $\ell$ and $F_n$ the current approximation, i.e. $F_n = \sum_{i=1}^n \alpha_i d_i$ for $d_i$ being a spherical harmonic $Y_{n_i,j_i}$ or an Abel--Poisson kernel $P\left(x^{(i)},\cdot\right)$. Moreover, let $\T_\daleth $ be the upward continuation operator and $\sigma$ the respective satellite orbit. Further, let the data be given on a point grid $\{\sigma\eta^{(i)}\}_{i=1,...,\ell},\ \eta_i \in \Omega$. We consider the Tikhonov functional with a penalty term dependent on the norm of the Sobolev space $\SBN_2$. At last, $P_n$ denotes a Legendre polynomial and $\era,\ \ephi,\ \ete$ represents the common local orthonormal basis vectors (up, East and North) in $\real^3$, see \eqref{eqONB-R3}. Then we have for an Abel--Poisson kernel $P(x,\cdot)$ and $x(r,\lon_x,t_x) \in \mathring{\ball}_1(0)$ the terms
\begin{align}
a_1(P(x,\cdot)) &\coloneqq \langle R^n, \T_\daleth P(x,\cdot) \rangle_{\real^\ell} 
	= \sum_{i=1}^\ell \frac{R^n_i}{4\pi} \frac{\sigma^2 - |x|^2}{\left(\sigma^2 + |x|^2 - 2\sigma x \cdot \eta^{(i)}\right)^{3/2}}, \label{eqa1} \\
a_2(P(x,\cdot)) &\coloneqq \langle F_n, P(x,\cdot) \rangle_{\SBN_2}	= \sum_{i=1}^n \alpha_i \left\{ \begin{matrix}
	T_1,  &d_i &= &Y_{n_i,j_i} \\
	T_2, &d_i &= &P\left(x^{(i)},\cdot\right),\\
	\end{matrix} \right. \label{eqa2}\\
T_1 &\coloneqq 	(n_i+0.5)^4 |x|^{n_i} Y_{n_i,j_i}\left( \frac{x}{|x|} \right)\\
T_2 &\coloneqq \sum_{n=0}^{\infty} (n+0.5)^4 \left( |x_i||x| \right)^n \frac{2n+1}{4\pi} P_n \left(\frac{x_i}{|x_i|} \cdot \frac{x}{|x|} \right)\\	
b_1(P(x,\cdot)) &\coloneqq \| \T_\daleth P(x,\cdot) \|^2_{\real^\ell}
	= \sum_{i=1}^\ell \frac{\left( \sigma^2 - |x|^2 \right)^2}{16\pi^2\left(\sigma^2 + |x|^2 - 2\sigma x \cdot \eta^{(i)}\right)^3}, \label{eqb1}\\
\intertext{and}
b_2(P(x,\cdot)) &\coloneqq \|  P(x,\cdot) \|^2_{\SBN_2}
	= \sum_{n=0}^\infty \frac{2n+1}{4\pi} (n+0.5)^4 |x|^{2n} . \label{eqb2}
\end{align} 
Their partial derivatives with respect to $x_j$ are given by 
\begin{align}
&\partial_{x_j} a_1(P(x,\cdot)) \\
&= - \sum_{i=1}^\ell \frac{R^n_i}{4\pi} \left[ \frac{2x_j}{\left(\sigma^2 + |x|^2 - 2\sigma x \cdot \eta^{(i)}\right)^{3/2}} + \frac{3\left(\sigma^2-|x|^2\right)\left(x_j-\sigma \eta_j^{(i)}\right)}{\left(\sigma^2 + |x|^2 - 2\sigma x \cdot \eta^{(i)}\right)^{5/2}} \right], \qquad \label{eqdxja1}
\end{align} 
\begin{align}
&\partial_{x_j} a_2(P(x,\cdot))= \sum_{i=1}^n \alpha_i \left\{ \begin{matrix}
	\partial_{x_j} T_1, &d_i &= &Y_{n_i,j_i} \\
	\partial_{x_j} T_2, &d_i &= &P\left(x^{(i)},\cdot\right),  \\
	\end{matrix} \right. \hspace*{0.5cm} \label{eqdxja2}\\
&\partial_{x_j} T_1 = (n_i+0.5)^4 \partial_{x_j} \left(|x|^{n_i} Y_{n_i,j_i}\left( \frac{x}{|x|} \right) \right)\\
&\partial_{x_j} T_2 = \sum_{n=0}^{\infty} (n+0.5)^4 \left( |x_i| \right)^n \frac{2n+1}{4\pi} \partial_{x_j} \left( |x|^n P_n \left(\frac{x_i}{|x_i|} \cdot \frac{x}{|x|} \right) \right)\\
&\partial_{x_j} b_1(P(x,\cdot)) \\
&= - \frac{1}{16\pi^2} \sum_{i=1}^\ell \left[ \frac{4x_j\left(\sigma^2-|x|^2\right)}{\left(\sigma^2 + |x|^2 - 2\sigma x \cdot \eta^{(i)}\right)^3} + \frac{6\left(\sigma^2-|x|^2\right)^2\left(x_j-\sigma\eta_j^{(i)}\right)}{\left(\sigma^2 + |x|^2 - 2\sigma x \cdot \eta^{(i)}\right)^4} \right], \qquad \label{eqdxjb1}
\intertext{and}
&\partial_{x_j} b_2(P(x,\cdot)) = \sum_{n=0}^\infty \frac{2n^2 + n}{2\pi} (n+0.5)^4 |x|^{2n-2}x_j. \label{eqdxjb2}
\end{align}
With respect to the derivative of $a_2$, we have further
\begin{align}
&\nabla \left(|x|^{n}Y_{n,j}\left( \frac{x}{|x|} \right) \right)\\
	&= \era n|x|^{n-1} Y_{n,j}\left( \frac{x}{|x|} \right)  + |x|^{n-1} \ephi \frac{j}{\sqrt{1-t^2}} Y_{n,-j}\left( \frac{x}{|x|} \right) \\ 
& + |x|^{n-1} \ete \sqrt{1-t_x^2} \sqrt{ \frac{2n+1}{4\pi} \frac{(n-|j|)!}{(n+|j|)!}} P'_{n,|j|}(t_x) 
\left\{\begin{matrix}
\sqrt{2} \cos(j\varphi_x), & j<0\\
1, & j=0 \\
\sqrt{2} \sin(j\varphi_x), & j>0\\
\end{matrix} \right. \label{eqnablaSH}
\intertext{and}
&\nabla \left(|x|^{n}P_n\left( \frac{x_i}{|x_i|} \cdot \frac{x}{|x|} \right)\right)\\
	&= |x|^{n-1} \left[ \era n P_n\left( \frac{x_i}{|x_i|} \cdot \frac{x}{|x|} \right) + \ephi P'_n\left( \frac{x_i}{|x_i|} \cdot \frac{x}{|x|} \right) \left( \frac{x_i}{|x_i|} \cdot \ephi \right) \right. \\
	& \qquad \qquad + \left.\ete  P'_n\left( \frac{x_i}{|x_i|} \cdot \frac{x}{|x|} \right) \left(\frac{x_i}{|x_i|} \cdot \ete \right) \right]. \label{eqnablaPn}
\end{align}
\end{theo}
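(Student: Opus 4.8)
The plan is to reduce each of $a_1,a_2,b_1,b_2$ to a Fourier series in the fully normalized spherical harmonics $Y_{n,j}$, to sum those series in closed form whenever a geometric series appears, and only then to differentiate, term by term. I would start with $a_1$ and $b_1$. Comparing the two representations of the Abel--Poisson kernel in \eqref{eqAPk} and invoking the addition theorem $\frac{2n+1}{4\pi}P_n(\xi\cdot\eta)=\sum_{j=-n}^nY_{n,j}(\xi)Y_{n,j}(\eta)$ gives the $\Lp2$-Fourier coefficients $\langle P(x,\cdot),Y_{n,j}\rangle_{\Lp2}=|x|^nY_{n,j}(x/|x|)$. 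Substituting these into \eqref{eqpotential} and using $\sigma>1$, $|x|<1$, the value of $\T_\daleth P(x,\cdot)$ at the data point $\sigma\eta^{(i)}$ is $\sigma^{-1}$ times an Abel--Poisson series of radius $|x|/\sigma<1$; summing it again by \eqref{eqAPk} and clearing the $\sigma$-powers yields the closed form $(\T_\daleth P(x,\cdot))_i=\frac{\sigma^2-|x|^2}{4\pi(\sigma^2+|x|^2-2\sigma x\cdot\eta^{(i)})^{3/2}}$, hence \eqref{eqa1}, and squaring and summing over $i$ gives \eqref{eqb1}.

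Next I would feed the same Fourier coefficients into the definition \eqref{eqH2IP} of $\langle\cdot,\cdot\rangle_{\SBN_2}$. Orthonormality of $\{Y_{n,j}\}$ gives $\langle Y_{n_i,j_i},P(x,\cdot)\rangle_{\SBN_2}=T_1$ at once, the addition theorem gives $\langle P(x^{(i)},\cdot),P(x,\cdot)\rangle_{\SBN_2}=T_2$, linearity of the inner product in $F_n=\sum_i\alpha_id_i$ produces \eqref{eqa2}, and taking $F_n=P(x,\cdot)$ together with $\sum_jY_{n,j}(\xi)^2=\frac{2n+1}{4\pi}$ gives \eqref{eqb2}. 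The $x_j$-derivatives of $a_1,b_1,b_2$ are then routine: $a_1$ and $b_1$ are finite sums of elementary rational functions of $x$, so one just applies the quotient and chain rules with $\partial_{x_j}|x|^2=2x_j$ and $\partial_{x_j}\bigl(\sigma^2+|x|^2-2\sigma x\cdot\eta^{(i)}\bigr)=2\bigl(x_j-\sigma\eta^{(i)}_j\bigr)$, collecting terms into \eqref{eqdxja1} and \eqref{eqdxjb1}; differentiating $|x|^{2n}$ termwise gives \eqref{eqdxjb2}.

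The one computation with actual content is $\partial_{x_j}a_2$, i.e.\ \eqref{eqdxja2}, which comes down to the gradient identities \eqref{eqnablaSH} and \eqref{eqnablaPn} for the solid spherical harmonic $|x|^nY_{n,j}(x/|x|)$ and the solid zonal function $|x|^nP_n(\zeta\cdot x/|x|)$, with $\zeta=x^{(i)}/|x^{(i)}|$ held fixed (both are homogeneous harmonic polynomials of degree $n$, hence smooth). I would prove these in spherical coordinates $x=r\xi(\lon,t)$, $r=|x|$, $t=\cos\vartheta$, using the orthonormal moving frame $(\era,\ephi,\ete)$ of \eqref{eqONB-R3} and the coordinate expression $\nabla f=(\partial_rf)\,\era+\frac{1}{r\sqrt{1-t^2}}(\partial_\lon f)\,\ephi+\frac{\sqrt{1-t^2}}{r}(\partial_t f)\,\ete$. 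For $f=r^nY_{n,j}$: the radial part is $nr^{n-1}Y_{n,j}$; the $\lon$-part uses the identity $\partial_\lon Y_{n,j}=jY_{n,-j}$, which one reads off from \eqref{eqfnsh} because $\partial_\lon$ sends the $\cos(j\lon)$/$\sin(j\lon)$ factor of $Y_{n,j}$ exactly to the factor of $Y_{n,-j}$ times the constant $j$ (the normalizing constant and the Legendre factor $P_{n,|j|}$ being identical for $\pm j$); and the $t$-part differentiates only the Legendre factor, producing $P'_{n,|j|}$. Collecting the three pieces gives \eqref{eqnablaSH}. For $f=r^nP_n(\zeta\cdot\xi)$ the chain rule gives $\partial_\bullet P_n(\zeta\cdot\xi)=P'_n(\zeta\cdot\xi)\,(\zeta\cdot\partial_\bullet\xi)$, and inserting $\partial_\lon\xi=\sqrt{1-t^2}\,\ephi$ and $\partial_t\xi=(1-t^2)^{-1/2}\,\ete$ gives \eqref{eqnablaPn}. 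Taking $j$-th Cartesian components of these gradients and chaining with the $n$-summation then produces $\partial_{x_j}T_1$ and $\partial_{x_j}T_2$.

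Finally I would justify that for the infinite series $T_2$, $b_2$ (and the sums inside $\langle\cdot,\cdot\rangle_{\SBN_2}$) summation commutes with $\partial_{x_j}$: on every compact $K\subset\mathring{\ball}_1(0)$ the general term and its derivative are bounded by $\mathrm{const}\cdot(n+0.5)^4(2n+1)\rho^n$ with $\rho=\max_{x\in K}|x|<1$ (and similarly with $|x^{(i)}||x|$), which is summable, so the Weierstrass $M$-test applies; the same domination covers the interchanges of the series with the inner products above. I expect the only genuinely delicate step to be the gradient \eqref{eqnablaSH}: getting the $\sqrt{1-t^2}$ scale factors of the moving frame right, checking that the term $\frac{j}{\sqrt{1-t^2}}Y_{n,-j}$ has no real singularity at the poles $t=\pm1$ (there $Y_{n,-j}$ vanishes for $j\neq0$, since $P_{n,|j|}(t)=(1-t^2)^{|j|/2}(\mathrm{d}^{|j|}/\mathrm{d}t^{|j|})P_n(t)$, so $P_{n,|j|}(t)/\sqrt{1-t^2}$ stays bounded), and pinning down the orientation convention for the North vector $\ete$ so that its coefficient in \eqref{eqnablaSH} carries a $+$ sign; everything else is bookkeeping with geometric series.
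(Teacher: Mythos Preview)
Your proposal is correct and follows essentially the same route as the paper's proof in the appendix: compute the spherical-harmonic coefficients of $P(x,\cdot)$, sum to the closed Abel--Poisson form for $\T_\daleth P(x,\cdot)$, plug into the $\SBN_2$-inner product for $a_2,b_2$, and then differentiate using the spherical decomposition of $\nabla$ together with $\partial_\lon Y_{n,j}=jY_{n,-j}$ and $\partial_\lon\era=\sqrt{1-t^2}\,\ephi$, $\partial_t\era=(1-t^2)^{-1/2}\ete$, including the same pole-regularity check for $\frac{j}{\sqrt{1-t^2}}Y_{n,-j}$. Your explicit Weierstrass $M$-test justification for termwise differentiation is a small addition of rigor that the paper leaves implicit.
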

For a proof, see Appendix \ref{app}.

\begin{theo} \label{th:gradRFMP}
With the abbreviations and derivatives from Theorem \ref{th:abbrevs}, the partial derivatives $\partial_{x_j},\ j=1,2,3,$ of $\RFMP(\cdot;n)$ are given by 
\begin{align}
\partial_{x_j} &\RFMP(P(x,\cdot);n) \\
&=\left[b_1(P(x,\cdot))+\lambda b_2(P(x,\cdot))\right]^{-2} \left[ 2[a_1(P(x,\cdot))-\lambda a_2(P(x,\cdot))] \right. \\
& \qquad \times [\partial_{x_j} a_1(P(x,\cdot))-\lambda \partial_{x_j}a_2(P(x,\cdot))] [b_1(P(x,\cdot))+\lambda b_2(P(x,\cdot))] \\
& \qquad \qquad  \left. - [a_1(P(x,\cdot))-\lambda a_2(P(x,\cdot))]^2[\partial_{x_j}b_1(P(x,\cdot))+\lambda \partial_{x_j}b_2(P(x,\cdot))] \right].
\end{align}
\end{theo}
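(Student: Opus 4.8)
The plan is to obtain the formula by a direct application of the quotient and chain rules, once the objective function has been rewritten in terms of the abbreviations introduced in Theorem~\ref{th:abbrevs}.

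First I would observe that, for $d=P(x,\cdot)$, Definition~\ref{def:objfunRFMP} together with \eqref{eqa1}--\eqref{eqb2} gives
\[
\RFMP(P(x,\cdot);n) = \frac{\bigl(a_1(P(x,\cdot)) - \lambda\, a_2(P(x,\cdot))\bigr)^2}{b_1(P(x,\cdot)) + \lambda\, b_2(P(x,\cdot))},
\]
since the numerator in \eqref{eqobjfunRFMP} equals $\langle R^n,\T_\daleth P(x,\cdot)\rangle_{\real^\ell} - \lambda\langle F_n,P(x,\cdot)\rangle_{\SBN_2} = a_1-\lambda a_2$ and its denominator equals $\|\T_\daleth P(x,\cdot)\|_{\real^\ell}^2 + \lambda\|P(x,\cdot)\|_{\SBN_2}^2 = b_1+\lambda b_2$. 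Abbreviating the numerator by $N(x)=(a_1-\lambda a_2)^2$ and the denominator by $M(x)=b_1+\lambda b_2$, the quotient rule yields $\partial_{x_j}(N/M) = M^{-2}\bigl[(\partial_{x_j}N)\,M - N\,\partial_{x_j}M\bigr]$. By the chain rule $\partial_{x_j}N = 2(a_1-\lambda a_2)(\partial_{x_j}a_1 - \lambda\,\partial_{x_j}a_2)$, and by linearity $\partial_{x_j}M = \partial_{x_j}b_1 + \lambda\,\partial_{x_j}b_2$. Substituting these two expressions and factoring out $M^{-2} = [b_1(P(x,\cdot))+\lambda b_2(P(x,\cdot))]^{-2}$ reproduces exactly the claimed identity. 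All four partial derivatives $\partial_{x_j}a_1$, $\partial_{x_j}a_2$, $\partial_{x_j}b_1$, $\partial_{x_j}b_2$ on the right-hand side are the ones already provided by Theorem~\ref{th:abbrevs} (with $\partial_{x_j}a_2$ and $\partial_{x_j}b_2$ further expanded via \eqref{eqnablaSH}--\eqref{eqnablaPn}), so the formula is fully explicit and nothing else has to be evaluated.

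The only genuinely non-routine point is the legitimacy of the term-by-term differentiation underlying $\partial_{x_j}a_2$ (through $T_2$) and $\partial_{x_j}b_2$, since these are infinite Legendre series in $|x|$ and $|x^{(i)}|$. Because $x$ ranges over the open unit ball, one fixes a compact neighbourhood $\overline{\ball_\rho(0)}$ with $\rho<1$ containing $x$ (and, if needed, $|x^{(i)}|\le\rho'<1$), on which — using $|P_n|\le 1$ and $|P_n'|\le n^2$ — the differentiated general term is dominated in absolute value by a constant times $(n+0.5)^4\,n\,\rho^{2n-2}$, respectively by such a factor times $(\rho\rho')^n$ for the mixed series; these majorants are summable, so the differentiated series converge uniformly on $\overline{\ball_\rho(0)}$ and $\partial_{x_j}$ may be interchanged with the summation. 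A further small check is that $M(x)=b_1+\lambda b_2>0$ on all of $\mathring{\ball}_1(0)$, which holds since the $n=0$ term of $b_2$ already gives $b_2\ge \tfrac{1}{4\pi}(0.5)^4>0$, so $M^{-2}$ is well defined and the quotient rule applies. This justification of differentiating under the sum is the step I would expect to spend the most care on; the remainder is the mechanical calculus summarised above, and the component derivatives may simply be quoted from Theorem~\ref{th:abbrevs}.
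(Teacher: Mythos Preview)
Your argument is correct and follows exactly the route the paper takes: the paper's own proof consists of the single sentence ``We only apply the common rules for derivatives,'' so your explicit application of the quotient and chain rules to $N/M$ with $N=(a_1-\lambda a_2)^2$ and $M=b_1+\lambda b_2$ is precisely what is intended. Your additional care about interchanging $\partial_{x_j}$ with the infinite series in $a_2$ and $b_2$, and about $M>0$, goes beyond what the paper supplies and is a welcome (if not strictly required) piece of rigour.
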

\begin{proof} 
We only apply the common rules for derivatives. 
\end{proof}

Thus, for Abel--Poisson kernels, we determined the gradient of the modelled objective functions $\RFMP(\cdot;n),\ n \in \nat,$ analytically such that we are able to use a gradient-based optimization method. We use the primal-dual interior point filter line search method \textsc{Ipopt}. To enable parallelization, we installed the linear solver \texttt{ma97} from the HSL package. For more details on the \textsc{Ipopt} algorithm and the HSL package, see \citep{HSLRef,Nocedaletal2008,IpoptDocu,Waechteretal2005_1,Waechteretal2005_2,Waechteretal2006}. In practice, we set a few options manually which we explain in the necessary contexts later. In all other cases, we use the default option values. However, note that due to \citep{IpoptDocu}, we can only expect to obtain local solutions of the optimization problem. Furthermore, this algorithm uses a starting point to start its procedure towards an optimal Abel--Poisson kernel. This starting point is denoted by $d_{n+1}^{\mathrm{APK,loc}}$ in Algorithm \ref{algo:LRFMPmain} and is computed in S3.2. We obtain $d_{n+1}^{\mathrm{APK,loc}}$ by comparing the objective value $\RFMP(P(x,\cdot); n)$ for a selection of kernels given to the algorithm in the discrete dictionary by $[\hat{K}]_{\mathrm{APK}}$ and choosing the one with the highest value. As we have computed this kernel, we make use of it in S3.4 as well in case the optimization failed to find a better kernel. 

\subsection{Additional features for practice} \label{subsec:features}

The previously presented algorithm gives us a first and basic learning technique. During its development, we faced several problems dependent on the choice of the data and the given inverse problem. In order to overcome these difficulties, we introduced some additional features, which will be explained in the following.

First of all, the data of the monthly variation of the gravitational potential provided by GRACE attains very small values. In our experiments, these values lie in the interval [-0.1,0.1]. When inserting this data into the objective function for determining an optimal Abel--Poisson kernel, the \textsc{Ipopt} solver fails to find a solution at first. Thus, we set the option obj\_scaling\_factor to $-10^{10}$. For the EGM-data, we only use $-1$ to perform a maximization instead of a minimization. Note that the scaled objective is only seen internally to support the optimizer.

Next, to cut runtime and possible round-off errors, we implemented a restart method. We initiate a new run of the algorithm by resetting $F_E$ to zero after a previously chosen iteration number $E$. Note that, in contrast to the restart procedure of the ROFMP, see, for example, \citep{Telschow2014}, we also reset the regularization term $\lambda \| F_E \|_{\SBN_2}^2$ to zero. In our experiments, we used $E=250$. 

Furthermore, we saw that the learnt dictionary heavily depends on the regularization parameter. Thus, we have to use the same regularization parameter as we used during learning when applying the learnt dictionary. Moreover, in contrast to previous works on the IPMPs, see, for instance, \citep{Telschow2014}, the use of a non-stationary regularization parameter is necessary when learning a dictionary and applying it. In the previous works, the idea of a non-stationary, decreasing regularization parameter was explained in order to emphasize accuracy instead of smoothness of the obtained approximation. However, the improvement of the results did not justify the additional computational expenses of choosing a parameter and a decrease routine. Nonetheless, we reconsidered this idea since the main aim of the LRFMP is to learn a dictionary and a decreasing parameter appears to guide the learning process positively. Thus, we use a non-stationary regularization parameter $\lambda_n = (\lambda_0\| R_0\|_{\real^\ell}/(n+1))$, where $n$ is the current iteration number and $\lambda_0$ is an optimal regularization parameter. Our experiments show that with a decreasing regularization parameter, we determine a dictionary which yields a better approximation when applied. Hence, with a learnt dictionary, the use of a non-stationary regularization parameter has an impact on the result of the RFMP. 

Next, for the case of a high satellite orbit or the seasonal variation in the gravitational potential obtained via GRACE, we developed a dynamic dictionary approach. Note that in previous literature on dictionary learning, see, for example, \citep{Pruente2008}, it was mentioned that the structure of the input data could or even should be considered when learning a dictionary. We developed two strategies to learn a dynamic dictionary whose combination works well in the experiments under considerations. 

First of all, we demand that the first $250$ learnt dictionary elements are spherical harmonics. It seems that after these $250$ iterations, the current residual $R_n$ has a rougher structure than the initial residual $R_0$. Then the optimization routine finds more easily a more sensible solution and, therefore, learns a better dictionary. 
Additionally, we allow to only choose from the first $n+1$ learnt dictionary elements in the $n$-th iteration step of the RFMP when we use the learnt dictionary. In this case, the order of chosen dictionary elements from the LRFMP has to be preserved. In this way, the optimized trial function which was chosen in the $n$-th step of the LRFMP can be chosen in the RFMP as well. Additionally, we save runtime as the dictionary size is small at the beginning. At last, this also treats possible complications which might otherwise arise from the non-stationary regularization parameter. The fact that we decrease this parameter yields the choice of very localized trial functions in later iterations of the LRFMP. If we allowed the RFMP to choose them prematurely this would only lead to a reduced data error and not a better approximation as we have seen in practice. If we allow the RFMP to only choose from the first $n$ learnt dictionary elements, we can prevent it to choose very localized kernels prematurely.

With these features included in the LRFMP, we currently run our experiments.

\section{Numerical results} \label{sec:numerics}

\subsection{Setting of the experiments} \label{subsec:setting}

In this section, we present the results of the learnt dictionary compared to a manually chosen dictionary applied in the standard RFMP. We will use data from the EGM2008 as well as GRACE satellite data. The EGM2008 data is evaluated up to degree 1500. For the GRACE data, we computed the  meanfield from 2003 to 2013 averaged from the release 5 products from JPL, GFZ and CSR as was proposed in \citep{Sakumuraetal2014}. This meanfield was subtracted from the data corresponding to May 2008. Note that in May, traditionally, the wet season in the Amazon basin is about to end such that we can expect a concentration of masses in this region. Additionally, we smoothed the data with a Cubic Polynomial Scaling Function (see, for instance, \citep{Schreiner1996} and \citep[][p.~295]{Freedenetal1998}) of scale $5$. %Further, we also consider experiments with a synthetic anomaly. For this, we added an Abel--Poisson kernel with scale $r=0.9$ as well as its centre $\xi$ at Nice (for the experiments with GRACE data) and $\xi$ at Colombo (for EGM data) to the expansion in spherical harmonics. We present results for the downward continuation from 500 km satellite height  with $5\%$ white noise. 

In all experiments, we compute the data on an equidistributed Reuter grid of 12684 data points, see, for instance, \citep{Reuter1982} and \citep[][p.~137]{Michel2013}. We choose the constant regularization parameter $\lambda$ of the RFMP and starting regularization parameter $\lambda_0$ of the LRFMP (see Subsection \ref{subsec:features}) such that they yield the lowest relative approximation error after 2000 iterations. In detail, we choose $\lambda=10^{-2}$ for both experiments and $\lambda_0 = 10^{-4}$ for the experiment with EGM data and $\lambda_0=10^{-1}$ for the experiment with GRACE data.

The arbitrarily chosen dictionary with which we compare the learnt dictionary is chosen similarly to \citep{Telschow2014}. We choose
\begin{align}
\dic^\mathrm{m} &\coloneqq [N^\mathrm{m}]_\mathrm{SH} + [K^\mathrm{m}]_\mathrm{APK}\label{eqdicraw}
\intertext{with}
N^\mathrm{m} &\coloneqq \{(n,j)\ |\ n=0,...,25;\ j=-n,...,n\},\\
K^\mathrm{m} &\coloneqq \{ r\xi\ |\ r \in R^\mathrm{m},\  \xi \in X^\mathrm{m}\}, \\
R^\mathrm{m} &\coloneqq  \{0.75,0.80,0.85,0.89,0.91,0.93,0.94,0.95,0.96,0.97\} 
\end{align}
and an equidistributed Reuter grid $X^\mathrm{m}$ of 4551 grid points on the sphere. All in all, the manually chosen dictionary contains $46186$ dictionary elements. 

The LRFMP needs a starting dictionary as well to provide the spherical harmonics and starting points for the optimization problem. For an equidistributed Reuter grid $X^\mathrm{s}$ of $1129$ grid points, we use 
\begin{align}
\dic^{\mathrm{s,EGM}} &\coloneqq \left[N^{\mathrm{s,EGM}}\right]_\mathrm{SH} + [K^\mathrm{s}]_\mathrm{APK}\label{eqdictra1}
\intertext{and}
\dic^{\mathrm{s,GRACE}} &\coloneqq \left[N^{\mathrm{s,GRACE}}\right]_\mathrm{SH} + [K^\mathrm{s}]_\mathrm{APK},\label{eqdictra2}
\intertext{respectively, with} 
N^{\mathrm{s,EGM}} &\coloneqq \{(n,j)\ |\ n=0,...,100;\ j=-n,...,n\},\\
N^{\mathrm{s,GRACE}} &\coloneqq \{(n,j)\ |\ n=0,...,60;\ j=-n,...,n\},\\
K^\mathrm{s} &\coloneqq \{ r\xi\ |\ r=0.94,\  \xi \in X^\mathrm{s}\},
\end{align}
From experience, we know that it is better to cut down the number of scales $r$ than the number of centres $\xi$ of the Abel--Poisson kernels. Thus, the starting dictionary contains $11330$ dictionary elements in the case of EGM2008 data and $4850$ dictionary elements in the case of GRACE data. 

We use the \textsc{Ipopt} optimizer with the linear solver \texttt{ma97} from HSL. For EGM data, we demand a desired and acceptable tolerance of $10^{-4}$ of the optimal solution. For GRACE data, we demand these tolerances to be only $10^0$ due to the scaling of the objective function explained in Section \ref{subsec:features}. As we need to ensure that the constraint is not violated during the optimization process, we set $r^2 <0.98999999^2$ as well as the options theta\_max\_fact $10^{-2}$ and watchdog\_shortened\_iter\_triggered $0$ in practice. For details on these options, see the \textsc{Ipopt} documentation \citep{IpoptDocu}.
 
We terminate the LRFMP as well as the RFMP in all cases when one of the following termination criteria is fulfilled. Either the relative data error is less than $10^{-8}$ or we reached $3000$ iterations. We also terminate the LRFMP when the last chosen coefficient $\alpha_{n+1}$ is less than $10^{-5}$. We experienced that otherwise the additionally learnt dictionary elements do not improve the solution and we can easily stop the learning process at this stage. Note that in stopping after at most 3000 iterations, we also limit the learnt dictionary to at most $3000$ learnt trial functions. This means, that the learnt dictionary is much smaller than the manually chosen dictionary with which we compare the learnt dictionary. Further, note that, due to its size, the manually chosen dictionary obviously has a much larger storage demand.

The results which we will compare here are obtained as follows: in one case, we use the RFMP with the manually chosen dictionary $\dic^{\mathrm{m}}$. In the second case, we first learn a dictionary $\dic^{\ast,\bullet}$ (see Algorithm \ref{algo:LRFMPmain}) for $\bullet \in \{ \mathrm{EGM}, \mathrm{GRACE} \}$ by using the LRFMP (which requires the starting dictionary $\dic^{\mathrm{s},\bullet}$) and then run the RFMP with the learnt dictionary $\dic^{\ast,\bullet}$. The major question is: is the learnt dictionary able to yield better results than the manually chosen dictionary?

The plots shown in this paper are done with MATLAB. Note that the colours for the results of the GRACE data are flipped in comparison to the results of the EGM data. This is done in order to emphasize wet regions with blue colour and dryer regions with red colour.

\subsection{Results} \label{subsec:results}

In Figure \ref{Fig:Results}, the results of the two experiments are shown. The first row shows the results with the EGM data. The second row depicts the results of the GRACE data. In the left-hand column the solution is given. The middle column shows the absolute approximation error of the RFMP with the manually chosen dictionary. The right-hand column depicts this error of the RFMP with the learnt dictionary. We adjusted the scales of the values for a better comparison.

Obviously, in both cases the algorithm is able to construct a good approximation. The relatively low errors occur basically within regions where more local structures are given. These regions are in the case of the EGM data in particular the Andean region as well as the Himalayas and the borders of the tectonic plates in Asia. In the case of monthly GRACE data, the masses in the Amazon basin originating from the ending wet season show the strongest structure. As we only allow 3000 iterations, it can be expected that such regions cannot be approximated perfectly. 

Particularly interesting are the results in the right-hand column which were obtained with the learnt dictionary. Clearly, in both scenarios, the approximation error is notably reduced. This is, in particular, also the case in the regions with localized anomalies.

\begin{sidewaysfigure}
	\includegraphics[width=0.33\textheight]{./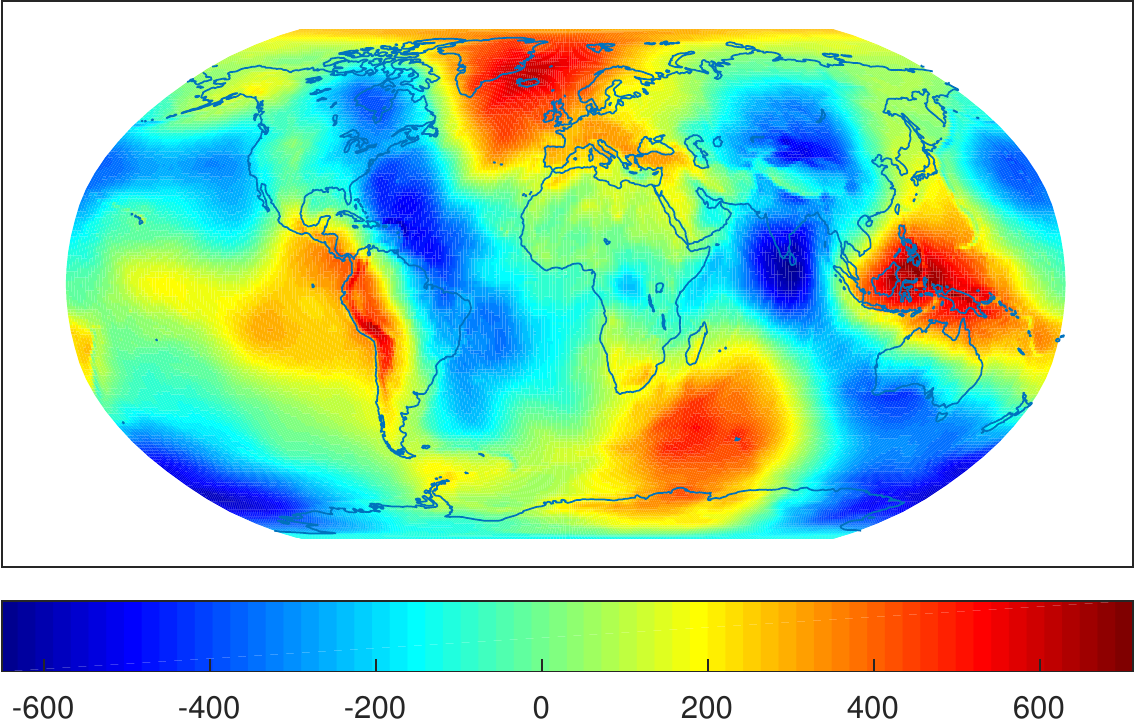}%[trim= 16mm 31mm 11mm 23mm, clip,width=.32\linewidth]{./SolEGM.pdf}
	\includegraphics[width=0.33\textheight]{./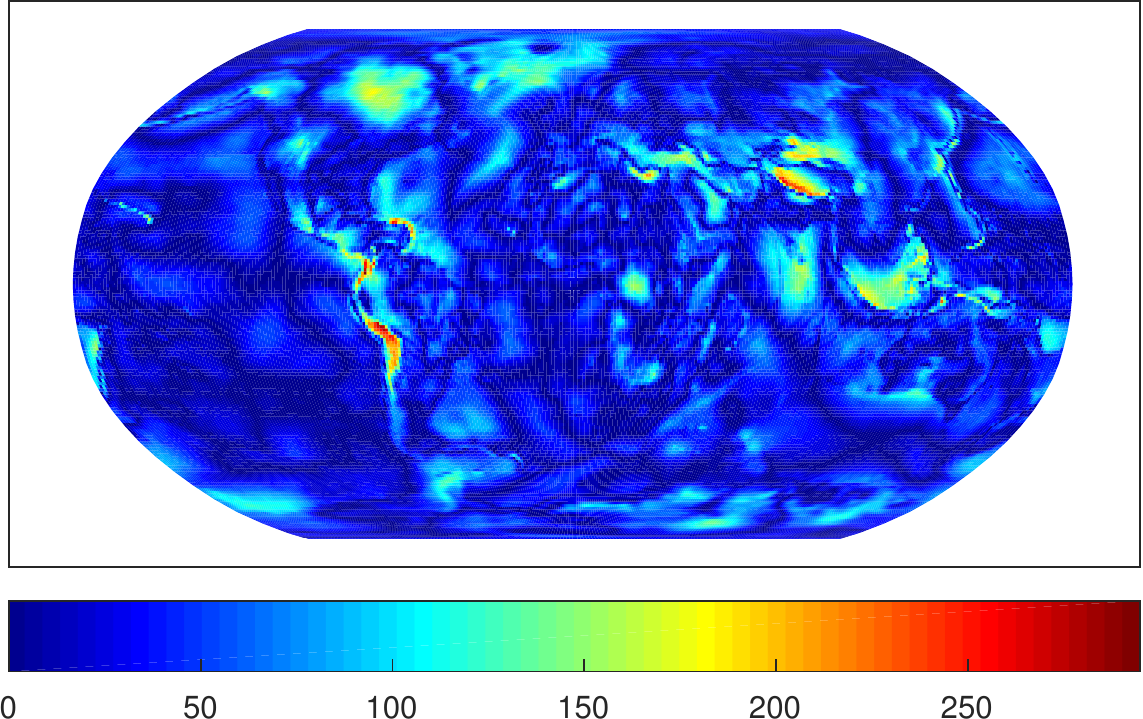}%[width=0.33\linewidth][trim= 16mm 31mm 11mm 23mm, clip,width=.32\linewidth]{./AErawEGM.pdf}
	\includegraphics[width=0.33\textheight]{./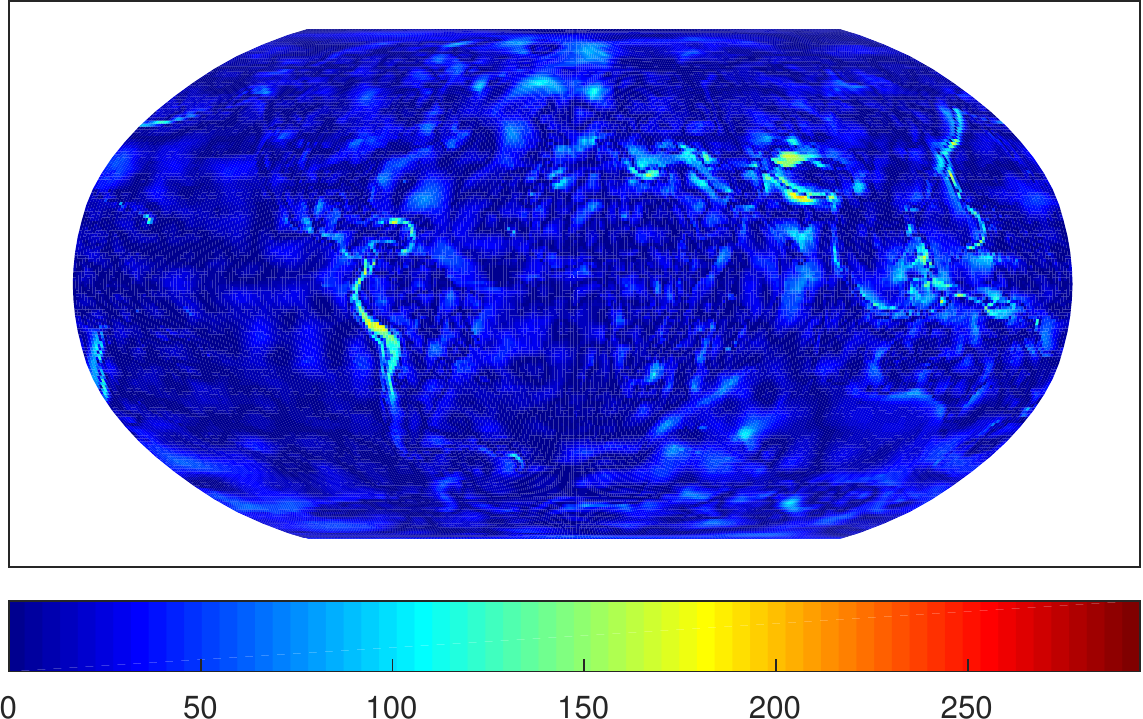}%[width=0.33\linewidth][trim= 16mm 31mm 11mm 23mm, clip,width=.32\linewidth]{./AEfreeEGM.pdf}

	\includegraphics[width=0.33\textheight]{./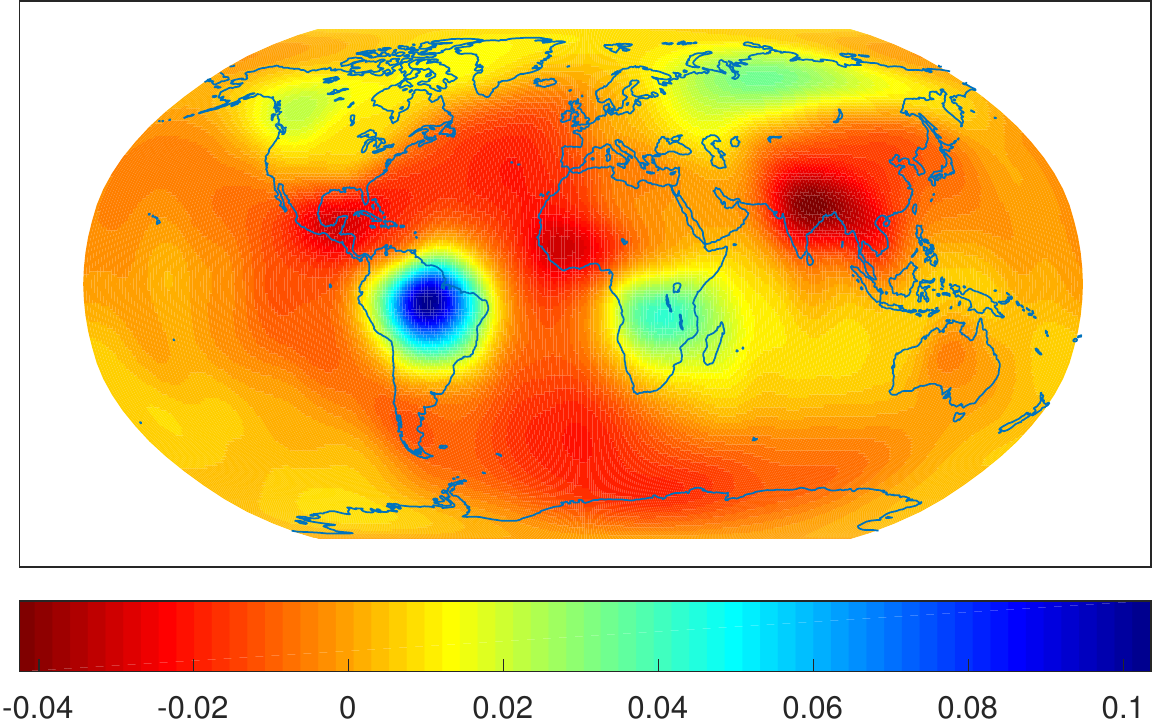}%[trim= 16mm 31mm 11mm 23mm, clip,width=.32\linewidth]{./SolEGM.pdf}
	\includegraphics[width=0.33\textheight]{./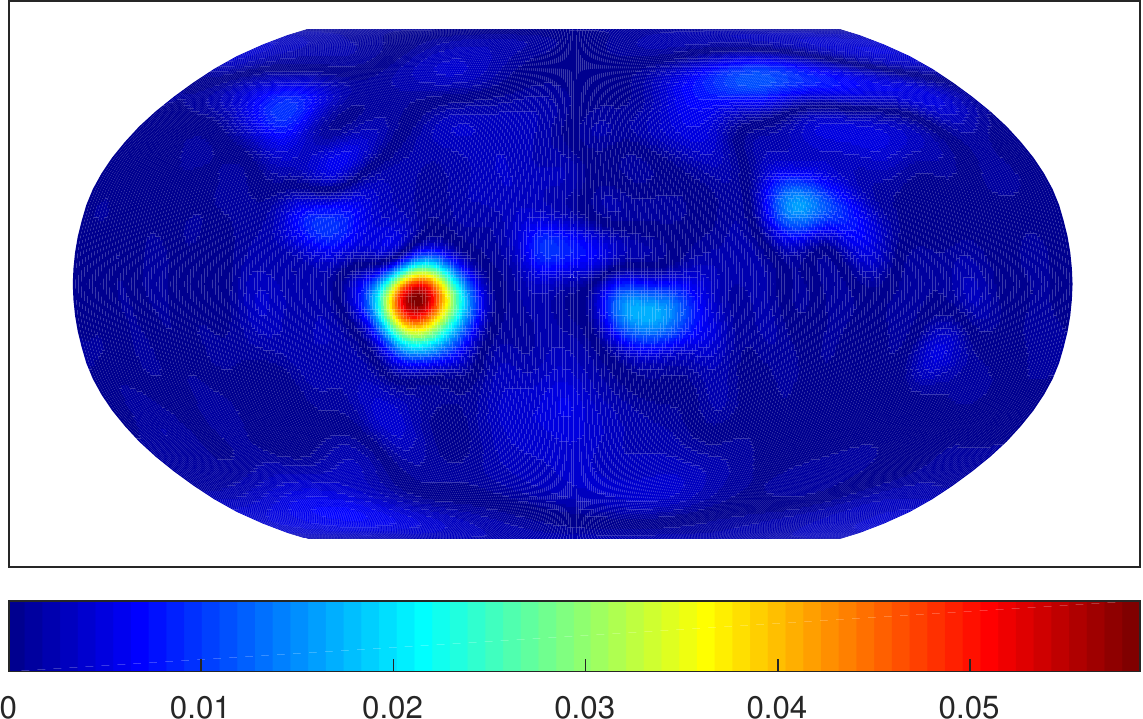}%[trim= 16mm 31mm 11mm 23mm, clip,width=.32\linewidth]{./AErawEGM.pdf}
	\includegraphics[width=0.33\textheight]{./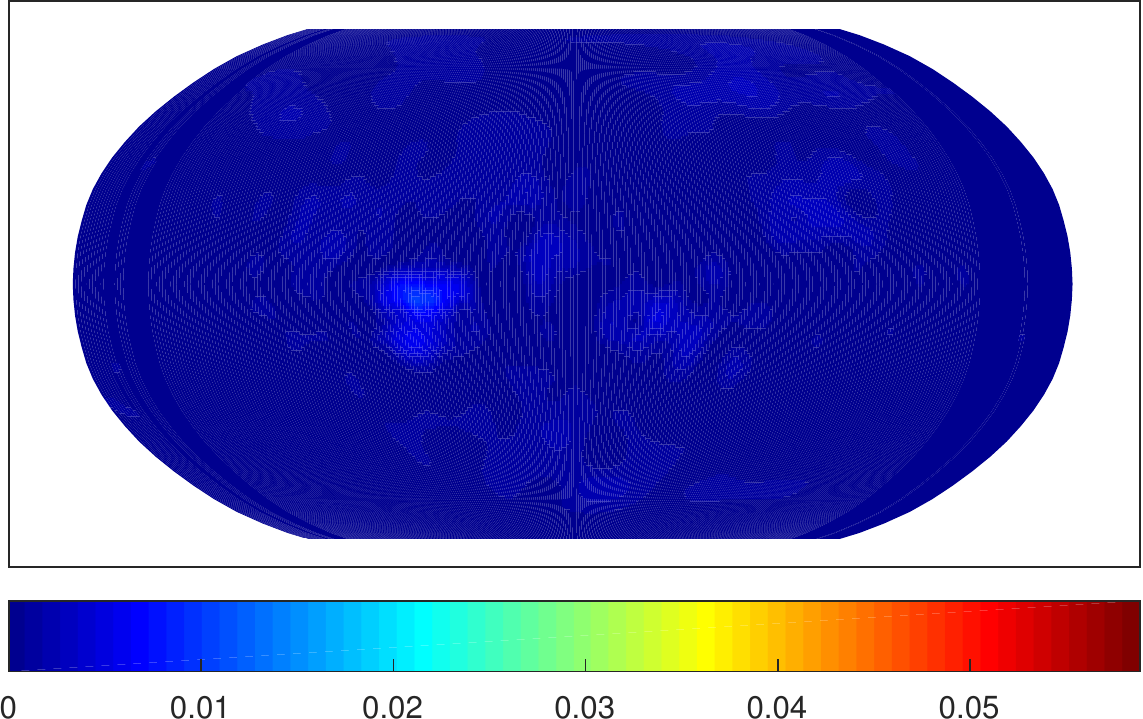}%[trim= 16mm 31mm 11mm 23mm, clip,width=.32\linewidth]{./AEfreeEGM.pdf}
	
	\caption{Results of the RFMP with a manually chosen dictionary and the learnt dictionary. Upper row: Results for EGM2008 data. Lower row: Results for GRACE data. Left: Solution. Middle: Absolute approximation error of RFMP with arbitrary dictionary. Right: Absolute approximation error of RFMP with learnt dictionary. 3000 iterations allowed in all experiments. All values in $\mathrm{m}^2/\mathrm{s}^2$.} \label{Fig:Results}	
\end{sidewaysfigure}

%\begin{wraptable}{l}{10.85cm}
\begin{table}[bp]
\normalsize
\begin{tabular}{lcccc}
\hline
			&    			& rel. approximation  	& rel. data 	& CPU-runtime	\\ 
			&$\# \dic$		& error					& error			& in h		\\
\hline
RFMP$^*$ 	& $46186$ 	  	& 0.000794				& 0.065830		& 299.82\\
RFMP$^{**}$ & $\leq$ 3000 	& 0.000455				& 0.047092		& 41.34	\\ 
\hline
\hline
RFMP$^*$ 	& $46186$ 	  	& 0.001461				& 0.057765		& 295.49\\
RFMP$^{**}$ & $\leq$ 3000 	& 0.000306				& 0.046404		& 7.5	\\ 
\hline
\end{tabular}
\caption{Comparison of RFMP with a manually chosen dictionary (RFMP*) and a learnt dictionary (RFMP**). Upper comparison with respect to EGM2008 data. Lower comparison with respect to GRACE data. 3000 iterations allowed in all experiments.}
\label{Tab:Results} 
\end{table}

%\end{wraptable} 

In Table \ref{Tab:Results}, the relative approximation and data errors after 3000 iterations of the RFMP with the manually chosen dictionary and the learnt dictionary are given. Furthermore, the (currently) needed CPU-runtime for the experiments are presented in the last column. We notice that we do not only obtain a smaller relative approximation error with the learnt dictionary but also a smaller relative data error. 

We state the CPU-runtime in hours for the sake of completeness. Although we see that we need less time to compute and preprocess the learnt dictionary than to preprocess the manually chosen dictionary, these results are to be understood with care because we currently do not work with an optimized code. 

All in all, the results show that we are able to learn a dictionary which yields a smaller data as well as approximation error than a manually chosen dictionary. In addition, we obtain these results with a sparser dictionary, less storage demand and an appropriate CPU-runtime.

\section{Conclusion and Outlook} \label{sec:conandout}

We started our investigations by aiming to improve our methods for gravity field modelling. In practice, we considered the RFMP for now. We expected to reduce the computational demands of the RFMP and the approximation error if a learnt dictionary is used rather than a manually chosen 'rule-of-thumb' dictionary.

In this paper, we presented a first approach to learn a dictionary of spherical harmonics and Abel--Poisson kernels for the downward continuation of gravitational data. In the numerical tests, we used data generated from EGM2008 and GRACE models. The idea of our learning approach is to iteratively minimize a Tikhonov functional over an infinite set of these trial functions. We do so by using non-linear constrained optimization techniques. Our results show that we obtain better results with respect to the relative data and approximation error when applying the learnt dictionary than when using a manually chosen dictionary in the RFMP. Moreover, we obtain these results with a sparser dictionary and less storage demand. Further, even non-optimized code yields satisfactory runtime results.

In the future research, we aim to transfer this learning approach to the ROFMP and enlarge its idea to Abel--Poisson wavelets and Slepian functions. Further, the presented learning algorithm can be viewed as one component of a structured learning technique. The question is whether we can determine an optimized learnt dictionary via learning from the results of the here presented strategy. In this way, we aim to build a learning hierarchy for the dictionary where the present technique represents the second level and the first level still needs to be developed. This could also lead to an automation of the additional features which we explained in Subsection \ref{subsec:features}. An additional objective is to obtain a dictionary for GRACE from a given set of training data to apply it with new test data such that we can provide an optimal dictionary for GRACE-FO satellite data. Further, we need to consider the theoretical aspects of the learning algorithm like determining a quantitative measure for the quality of a dictionary and investigating how the learnt dictionary of the LRFMP is related to that measure. In addition, with respect to practical aspects, we plan on optimizing our code to have more meaningful runtime results. 

\paragraph{Acknowledgement}
The authors gratefully acknowledge the financial support by the German Research Foundation (DFG; Deutsche Forschungsgemeinschaft), project MI 655/7-2.

\appendix

\section{Gradient of the objective function with respect to Abel--Poisson kernels} \label{app}
In this appendix, we prove Theorem \ref{th:abbrevs}. 

\paragraph{First considerations} We discuss the terms for the upward continuation operator as given in \eqref{eqpotential}. Note that the Euclidean inner product of two vectors is emphasized by using a '$\cdot$' at the particular positions. Additionally, we make use of the following basic aspects. 

In geomathematics, a common orthonormal basis in $\real^3$ is given by 
\begin{align}
\era(\lon,t) \coloneqq \left( \begin{matrix}
\sqrt{1-t^2} \cos(\lon) \\ \sqrt{1-t^2} \sin(\lon) \\ t
\end{matrix} \right),\
\ephi(\lon,t) \coloneqq \left( \begin{matrix}
-\sin(\lon) \\ \cos(\lon) \\ 0 
\end{matrix} \right),\
\ete(\lon,t) \coloneqq \left( \begin{matrix}
-t \cos(\lon) \\ -t \sin(\lon) \\ \sqrt{1-t^2}
\end{matrix} \right), \label{eqONB-R3}
\end{align}
see, for example, \citep[][p.~86]{Michel2013}. Note that it holds $\partial_\lon \era = \sqrt{1-t^2}\ephi$ and $\partial_t \era = \frac{1}{\sqrt{1-t^2}} \ete$. For the gradient $\nabla$, we will use a Cartesian definition as well as its decomposition into radial and angular parts. We have 
\begin{align}
\nabla = \left( \partial_{x_j} \right)_{j=1,2,3} 
= \era \frac{\partial}{\partial r} + \frac{1}{r} \left( \ephi \frac{1}{\sqrt{1-t^2}} \frac{\partial}{\partial \lon} + \ete \sqrt{1-t^2} \frac{\partial}{\partial t} \right), \label{eqnabla}
\end{align}
see, for instance, \citep[][p.~87]{Michel2013}. Next, we consider the following recurring inner products.
\begin{align}
\langle Y_{m,k}, Y_{n,j} \rangle_{\Lp{2}} &= \delta_{n,m}\delta_{j,k}, \label{eqIP1}\\
\langle P(x, \cdot), Y_{n,j} \rangle_{\Lp{2}} &= |x|^n Y_{n,j}\left( \frac{x}{|x|} \right), \label{eqIP2}
\end{align}
see, for instance, \citep[][p.~114]{Telschow2014}. At last, we note one specific property of the fully normalized spherical harmonics. It holds
\begin{align}
\frac{\partial}{\partial \lon} Y_{n,j}(\xi(\lon,t)) = jY_{n,-j}(\xi(\lon,t)), \label{eqSHprop}
\end{align}
see, for instance, \citep{Lewekeetal2018_2}. Now, we can compute the terms in Theorem \ref{th:abbrevs}.

\paragraph{The term $a_1(P(x,\cdot))$ and its derivative}
Obviously, for the formulation of $a_1(Px,\cdot))$ in \eqref{eqa1}, we only have to show that 
\begin{align}
\T P(x,\cdot) (\eta) = \frac{\sigma^2 - |x|^2}{4\pi(\sigma^2 + |x|^2 - 2\sigma x \cdot \eta)^{3/2}} \label{eqTP1}
\end{align}
for any $\eta \in \Omega$. We start at the left-hand side of \eqref{eqTP1}.
\begin{align}
\T P(x,\cdot)(\eta) 
&=\sum_{n=0}^\infty \sum_{j=1}^{2n+1} \langle Y_{n,j}, P(x,\cdot) \rangle_{\Lp{2}} \sigma^{-n-1} Y_{n,j}(\eta)\\
&=\sum_{n=0}^\infty \sum_{j=1}^{2n+1} \sigma^{-n-1} |x|^n Y_{n,j}\left( \frac{x}{|x|} \right)  Y_{n,j}(\eta)\\
&= \sum_{n=0}^\infty  |x|^n \sigma^{-n-1} \frac{2n+1}{4\pi} P_n\left( \frac{x}{|x|} \cdot \eta \right) 
= \frac{1}{\sigma} P \left(\frac{x}{\sigma},\eta\right) \\
&= \frac{1}{\sigma} \frac{1-|\tfrac{x}{\sigma}|^2}{4\pi(1+|\tfrac{x}{\sigma}|^2-2\tfrac{x}{\sigma}\cdot \eta)^{3/2}}  \\
&= \frac{\sigma^{-2} \left( \sigma^2-|x|^2 \right)}{4\pi\sigma \left( \sigma^{-2} \left( \sigma^2+|x|^2 - 2\sigma x \cdot \eta\right) \right)^{3/2} }   
= \frac{ \sigma^2-|x|^2 }{4\pi \left( \sigma^2+|x|^2 - 2\sigma x \cdot \eta \right)^{3/2} } . \label{eqTP}
\end{align}
Its derivative, as used in \eqref{eqdxja1}, is obtained via
\begin{align}
\partial_{x_j} &\T P(x,\cdot)(\eta)\\
&= \frac{\partial}{\partial x_j}\ \frac{ \sigma^2-|x|^2 }{4\pi \left( \sigma^2+|x|^2 - 2\sigma x \cdot \eta \right)^{3/2} }\\
&= \frac{ -2x_j\left(\sigma^2+|x|^2 - 2\sigma x \cdot\eta \right)^{3/2}  - 3\left(\sigma^2-|x|^2\right)\left(\sigma^2+|x|^2-2\sigma x \cdot \eta \right)^{1/2}\left( x_j - \sigma \eta_j \right)}{4\pi\left(\sigma^2 +|x|^2-2\sigma x \cdot \eta \right)^3} \\
&= -\frac{1}{4\pi}  \left[ \frac{ 2x_j} {\left(\sigma^2+|x|^2-2\sigma x \cdot \eta \right)^{3/2}} + \frac{ 3\left(\sigma^2-|x|^2\right)\left( x_j - \sigma\eta_j \right)  } {\left(\sigma^2+|x|^2-2\sigma x \cdot \eta \right)^{5/2}} \right].
\label{eqderivTP}
\end{align}

\paragraph{The term $a_2(P(x,\cdot))$ and its derivative}
For the current approximation $F_n$, we write $F_n = \sum_{i=1}^n \alpha_i d_i$ for dictionary elements $d_i = Y_{n_i,j_i}$ or $d_i=P\left(x^{(i)},\cdot\right)$ depending on what element was chosen in the $i$-th step. Then we can derive the representation \eqref{eqa2} of $a_2(P(x,\cdot))$ as follows 
\begin{align}
a_2(P(x,\cdot)) &\coloneqq \langle F_n, P(x,\cdot) \rangle_{\SBN_2} = \sum_{i=1}^n \alpha_i \langle d_i, P(x,\cdot) \rangle_{\SBN_2}\\
	& = \sum_{i=1}^n \alpha_i \left\{ \begin{matrix}
	(n_i+0.5)^4 |x|^{n_i} Y_{n_i,j_i}\left( \frac{x}{|x|} \right), &d_i &= &Y_{n_i,j_i}\\
	\sum_{n=0}^{\infty} (n+0.5)^4 \left( |x_i||x| \right)^n \frac{2n+1}{4\pi} P_n \left(\frac{x_i}{|x_i|} \cdot \frac{x}{|x|} \right), &d_i &= &P\left(x^{(i)},\cdot\right) \\
	\end{matrix} \right. \\
\end{align}
due to \eqref{eqIP1}, \eqref{eqIP2} and the addition theorem for spherical harmonics. The derivative of $a_2(P(x,\cdot))$ as given in \eqref{eqdxja2} is obvious. However, we have to show \eqref{eqnablaSH} and \eqref{eqnablaPn}. We will exchange the term $|x|$ by $r$ as well as $\frac{x}{|x|}$ by $\xi$ and $\frac{x^{(i)}}{|x^{(i)}|}$ by $\xi^{(i)}$ for this. Then we obtain the following results. We first consider \eqref{eqnablaSH}.

\begin{align}
\nabla_x &\left( r^{n}Y_{n,j}(\xi) \right) \notag\\
&= \era \left( \pdervr r^{n} Y_{n,j}(\xi(\lon,t)) \right) + \frac{1}{r} \ephi \frac{1}{\sqrt{1-t^2}} \left( r^{n} \pdervlon  Y_{n,j}(\xi(\lon,t)) \right)\\ 
& \qquad + \frac{1}{r} \ete \sqrt{1-t^2} \left( r^{n} \pdervt  Y_{n,j}(\xi(\lon,t)) \right) \\
&= \era nr^{n-1} Y_{n,j}(\xi(\lon,t))  + r^{n-1} \ephi \frac{j}{\sqrt{1-t^2}} Y_{n,-j}(\xi(\lon,t)) \notag \\ 
& \qquad + r^{n-1} \ete \sqrt{1-t^2} \sqrt{ \frac{2n+1}{4\pi} \frac{(n-|j|)!}{(n+|j|)!}} P'_{n,|j|}(t) \left\{ \begin{matrix}
\sqrt{2} \cos(j\lon), &j<0,\\
1, &j=0,\\
\sqrt{2} \sin(j\lon),  &j>0,\\
\end{matrix} \right. \label{eqnabla_h_SH}
\end{align}
where we used \eqref{eqnabla}, \eqref{eqSHprop} and \eqref{eqfnsh}. We have to take a closer look at \eqref{eqnabla_h_SH} regarding a possible singularity in $t=\pm 1$. The term \eqref{eqnabla_h_SH} contains two possibly problematic terms:
\begin{align}
\frac{j}{\sqrt{1-t^2}} Y_{n,-j}(\xi) \qquad \textrm{ and } \qquad 
\sqrt{1-t^2} \sqrt{ \frac{2n+1}{4\pi} \frac{(n-|j|)!}{(n+|j|)!}} P'_{n,|j|}(t). \label{eqSingterms}
\end{align}
We first consider the term on the left-hand side of \eqref{eqSingterms}. Obviously, if $j=0$, this is a removable singularity with a zero value. In the case $j\not=0$, we recall the definition of the fully normalized spherical harmonics, which we use in practice, from \eqref{eqfnsh} and \eqref{eqaLf}. Thus, for the problematic term, we have
\begin{align}
\frac{j}{\sqrt{1-t^2}} &Y_{n,-j}(\xi) \\
&= j \sqrt{ \frac{2n+1}{4\pi} \frac{(n-|j|)!}{(n+|j|)!}} \left(1-t^2\right)^{(|j|-1)/2} \left( \frac{\mathrm{d}^{|j|}}{\mathrm{d}t^{|j|}} P_n(t)\right) \left\{ \begin{matrix}
\sqrt{2} \cos(j\lon), & j<0,\\
1, & j=0,\\
\sqrt{2} \sin(j\lon), & j>0.\\
\end{matrix} \right. \label{eqSingSHFREE}
\end{align}
Obviously, there exists a problem only for $\tfrac{|j|-1}{2} < 0 \Leftrightarrow |j|-1<0 \Leftrightarrow |j|<1 \Leftrightarrow j=0.$ However, this is excluded in this case. Thus, there is no problem in the term of the left-hand side of \eqref{eqSingterms}. With respect to the term on the right-hand side of \eqref{eqSingterms}, we have
\begin{align}
&\sqrt{1-t^2} \sqrt{ \frac{2n+1}{4\pi} \frac{(n-|j|)!}{(n+|j|)!}} P'_{n,|j|}(t)\\
&= \sqrt{1-t^2} \sqrt{ \frac{2n+1}{4\pi} \frac{(n-|j|)!}{(n+|j|)!}} \frac{\mathrm{d}}{\mathrm{d}t} \left[ \left(1-t^2\right)^{|j|/2} \frac{\mathrm{d}^{|j|}}{\mathrm{d}t^{|j|}} P_n(t) \right] \notag \\
&= \sqrt{ \frac{2n+1}{4\pi} \frac{(n-|j|)!}{(n+|j|)!}} \left[ -jt \left(1-t^2\right)^{(|j|-1)/2} \frac{\mathrm{d}^{|j|}}{\mathrm{d}t^{|j|}} P_n(t) + \left(1-t^2\right)^{(|j|+1)/2} \frac{\mathrm{d}^{|j|+1}}{\mathrm{d}t^{|j|+1}} P_n(t) \right] \label{eqDerPnjFREE}
\end{align}
with the definition in \eqref{eqaLf}. Obviously, the problematic term is 
\begin{align}
\tfrac{j}{2} \left(1-t^2\right)^{(|j|-1)/2}.
\end{align}
If $j=0$, the term vanishes. If $j\not=0$, the exponents are non-negative. Thus, also the term on the right-hand side of \eqref{eqSingterms} contains no singularity. 

At last, for the derivative of $a_2(P(x,\cdot))$, we need to consider \eqref{eqnablaPn}. We obtain
\begin{align}
\nabla_x &\left( r^{n}  P_n(\xi^{(i)} \cdot \xi) \right) = \nabla_x \left(r^{n}  P_n(\xi^{(i)} \cdot \era) \right)\\
& = \era nr^{n-1}  P_n(\xi^{(i)} \cdot \era) + r^{n-1} \left( \ephi \frac{1}{\sqrt{1-t^2}} P'_n(\xi^{(i)} \cdot \era)  \left( \xi^{(i)} \cdot \pdervlon \era \right) \right.  \\
& \qquad \left. +\ \ete \sqrt{1-t^2}  P'_n(\xi^{(i)} \cdot \era) \left(\xi^{(i)} \cdot \pdervt \era\right) \right)  \notag\\
& = \era nr^{n-1}  P_n(\xi^{(i)} \cdot \era) + r^{n-1} \left( \ephi \frac{1}{\sqrt{1-t^2}} P'_n(\xi^{(i)} \cdot \era)  \left( \xi^{(i)} \cdot \sqrt{1-t^2}\ephi \right) \right.  \\
& \qquad \left. +\ \ete \sqrt{1-t^2}  P'_n(\xi^{(i)} \cdot \era) \left(\xi^{(i)} \cdot \frac{1}{\sqrt{1-t^2}}\ete \right) \right)  \\
& = r^{n-1} \left( \era n P_n(\xi^{(i)} \cdot \era) + \ephi P'_n(\xi^{(i)} \cdot \era)  \left( \xi^{(i)} \cdot \ephi \right) + \ete  P'_n(\xi^{(i)} \cdot \era) \left(\xi^{(i)} \cdot \ete \right) \right). \label{eqnabla_h_Pn}
\end{align}
With $\era = \frac{x}{|x|}$, this is the formulation of \eqref{eqnablaPn}.

\paragraph{The term $b_1(P(x,\cdot))$ and its derivative}
The term $b_1(P(x,\cdot))$ as in \eqref{eqb1} is obvious when we take \eqref{eqTP} into account. The derivative of $b_1(P(x,\cdot))$ as in \eqref{eqdxjb1} is obtained by
\begin{align}
\partial_{x_j} &b_1 (P( x, \cdot))\\
&= \frac{\partial}{\partial x_j } \frac{1}{16\pi^2}\sum_{i=0}^\ell \frac{ \left( \sigma^2-|x|^2 \right)^2}{\left( \sigma^2+|x|^2 - 2\sigma x \cdot \eta^{(i)} \right)^3 }\\
&=  \frac{1}{16\pi^2}\sum_{i=0}^\ell \left( \left( \sigma^2+|x|^2 -2\sigma x\cdot\eta^{(i)} \right)^{-6} \left(
-4x_j\left( \sigma^2-|x|^2 \right) \left( \sigma^2+|x|^2-2\sigma x\cdot\eta^{(i)} \right)^3 \right. \right. \\
& \left. \left. \qquad \qquad \qquad \qquad - 6\left( \sigma^2-|x|^2 \right)^2 \left( \sigma^2+|x|^2-2\sigma x\cdot\eta^{(i)} \right)^2 \left( x_j - \sigma \eta_j^{(i)} \right) \right) \right)\\
&=  -\frac{1}{16\pi^2}\sum_{i=0}^\ell \left(\frac{ 4x_j\left( \sigma^2-|x|^2 \right) }{ \left( \sigma^2+|x|^2 -2 \sigma x\cdot\eta^{(i)} \right)^3} + \frac{6\left( \sigma^2 -|x|^2 \right)^2 \left( x_j - \sigma \eta_j^{(i)} \right)}{ \left( \sigma^2+|x|^2 -2 \sigma x\cdot\eta^{(i)} \right)^4 } \right). \label{Pdxjb1}
\end{align}

\paragraph{The term $b_2(P(x,\cdot))$ and its derivative}

The formulation as in \eqref{eqb2} of $b_2(P(x,\cdot))$ is due to the following considerations which use \eqref{eqIP2} and $P_n(1) = 1$, see, for instance, \cite[][p.~49]{Michel2013}.
\begin{align}
\|P(x,\cdot)\|_{\SBN_2}^2 
	&= \sum_{n=0}^\infty \sum_{j=1}^{2n+1} (n+0.5)^4 \langle P(x, \cdot), Y_{n,j} \rangle_{\Lp{2}}^2\\
	&= \sum_{n=0}^\infty \sum_{j=1}^{2n+1} (n+0.5)^4 |x|^{2n} \left( Y_{n,j} \left( \frac{x}{|x|} \right) \right)^2\\
	&= \sum_{n=0}^\infty (n+0.5)^4 |x|^{2n} \frac{2n+1}{4\pi} P_n \left(\frac{x}{|x|} \cdot \frac{x}{|x|} \right)
	= \sum_{n=0}^\infty \frac{2n+1}{4\pi} (n+0.5)^4 |x|^{2n} .
\end{align}
The gradient with respect to $x$ is then obtained as follows.
\begin{align}
\nabla_x b_2(P(x,\cdot))
=  \sum_{n=0}^\infty  (n+0.5)^4 \frac{2n+1}{4\pi} \nabla_x \left( |x|^{2n} \right). \label{eqdxjPb2_1}
\end{align} 
With \eqref{eqnabla}, we have
\begin{align}
\nabla_x \left(  |x|^{2n} \right)
= \nabla_x r^{2n}
&= 2n \era r^{2n-1}
= 2n \era |x|^{2n-1}. \label{eqnabla_r_2n}
\end{align} 
Inserting this result into \eqref{eqdxjPb2_1}, we obtain
\begin{align}
\nabla_x b_2 (P(x,\cdot))
&=  \sum_{n=0}^\infty   \frac{2n^2+n}{2\pi} (n+0.5)^4\era |x|^{2n-1} 
=  \sum_{n=0}^\infty   \frac{2n^2+n}{2\pi} (n+0.5)^4 x |x|^{2n-2}. \label{eqdxjPb2}
\end{align} 
This is in accordance with \eqref{eqdxjb2}. Hence, Theorem \ref{th:abbrevs} is proven. \qed

\bibliographystyle{abbrvnat}     
{\footnotesize
\bibliography{biblio}}

\end{document}